\newtheorem{theorem}{Theorem}[section]
\newtheorem{lemma}[theorem]{Lemma}
\newtheorem{prop}[theorem]{Proposition}
\newtheorem{assumption}[theorem]{Assumption}
\newtheorem{coro}[theorem]{Corollary}
\theoremstyle{definition}
\theoremstyle{remark}
\newtheorem{remark}[theorem]{Remark}
\numberwithin{equation}{section}
\DeclareMathAlphabet{\mathsl}{OT1}{cmss}{m}{sl}
\SetMathAlphabet{\mathsl}{bold}{OT1}{cmss}{bx}{sl}
\newcommand{\al}{\ensuremath{\alpha}}
\newcommand{\ga}{\ensuremath{\gamma}}
\newcommand{\de}{\ensuremath{\delta}}
\newcommand{\ze}{\ensuremath{\zeta}}
\renewcommand{\th}{\ensuremath{\theta}}
\newcommand{\ka}{\ensuremath{\kappa}}
\newcommand{\la}{\ensuremath{\lambda}}
\newcommand{\si}{\ensuremath{\sigma}}
\newcommand{\om}{\ensuremath{\omega}}
\newcommand{\ve}{\ensuremath{\varepsilon}}
\newcommand{\Ga}{\ensuremath{\Gamma}}
\newcommand{\De}{\ensuremath{\Delta}}
\newcommand{\Om}{\ensuremath{\Omega}}
\newcommand{\cB}{\ensuremath{\mathcal B}}
\newcommand{\cE}{\ensuremath{\mathcal E}}
\newcommand{\cF}{\ensuremath{\mathcal F}}
\newcommand{\cL}{\ensuremath{\mathcal L}}
\newcommand{\bbN}{\ensuremath{\mathbb N}}
\newcommand{\bbR}{\ensuremath{\mathbb R}}
\newcommand{\bbZ}{\ensuremath{\mathbb Z}} 
\newcommand{\me}{\ensuremath{\mathrm{e}}}
\newcommand{\md}{\ensuremath{\mathrm{d}}}
\newcommand{\scpr}[3]{%
  \ensuremath{%
    \big\langle
      #1, #2
    \big\rangle_{\raisebox{0.1ex}{$\scriptstyle \ell^{\raisebox{.1ex}{$\scriptscriptstyle 2$}} (#3)$}}
  }
}
\newcommand{\norm}[3]{%
  \ensuremath{%
    \big\lVert
      #1
    \big\rVert_{\raisebox{-.0ex}{$\scriptstyle \ell^{\raisebox{.2ex}{$\scriptscriptstyle #2$}} (#3)$}}
  }
}
\newcommand{\Norm}[2]{%
  \ensuremath{%
    \big\lVert
      #1
    \big\rVert_{\raisebox{-.0ex}{$\scriptstyle #2$}}
  }
}
\DeclareMathOperator{\mean}{\mathbb{E}}
\DeclareMathOperator{\prob}{\mathbb{P}}
\DeclareMathOperator{\Prob}{\mathrm{P}}
\DeclareMathOperator{\supp}{\mathrm{supp}}
\DeclareMathOperator{\argmax}{\mathrm{arg max}}
\newcommand{\av}[1]{\mathop{\mathrm{av}}(#1)}
\newcommand{\ldef}{\ensuremath{\mathrel{\mathop:}=}}
\newcommand{\rdef}{\ensuremath{=\mathrel{\mathop:}}}
\newcommand{\indicator}{\ensuremath{\mathbbm{1}}}
\begin{document}

\title[Heat kernel and intrinsic metric for random walks with general speed measure]{Heat kernel estimates and intrinsic metric for random walks with general speed measure under degenerate conductances}


\author{Sebastian Andres}
\address{University of Cambridge}
\curraddr{Centre for Mathematical Sciences, Wilberforce Road, Cambridge CB3 0WB}
\email{s.andres@statslab.cam.ac.uk}
\thanks{}

\author{Jean-Dominique Deuschel}
\address{Technische Universit\"at Berlin}
\curraddr{Strasse des 17. Juni 136, 10623 Berlin}
\email{deuschel@math.tu-berlin.de}
\thanks{}

\author{Martin Slowik}
\address{Technische Universit\"at Berlin}
\curraddr{Strasse des 17. Juni 136, 10623 Berlin}
\email{slowik@math.tu-berlin.de}
\thanks{}

\subjclass[2010]{39A12; 60J35; 60K37;82C41}

\keywords{random walk; heat kernel; intrinsic metric}

\date{\today}

\dedicatory{}

\begin{abstract}
  We establish heat kernel upper bounds for a continuous-time random walk under unbounded conductances satisfying an integrability assumption, where we correct and extend recent results in \cite{ADS16a} to a general class of speed measures. The resulting heat kernel estimates are governed by the intrinsic metric induced by the speed measure. We also provide a comparison result of this metric with the usual graph distance, which is optimal in the context of the random conductance model with ergodic conductances.  
\end{abstract}

\maketitle

\tableofcontents

\section{Introduction}
Let $G = (V, E)$ be an infinite, connected, locally finite graph with vertex set $V$ and (non-oriented) edge set $E$.  We will write $x \sim y$ if $\{x,y\} \in E$.  Consider a family of positive weights $\om = \{\om(e) \in (0, \infty) : e \in E\}\in \Omega$, where $\Omega=\bbR_+^E$ is the set of all possible configurations. We also refer to $\om(e)$ as the \emph{conductance} of the edge $e$.  With an abuse of notation, for $x, y \in V$ we set $\om(x,y) = \om(y,x) = \om(\{x,y\})$ if $\{x,y\} \in E$ and $\om(x,y) = 0$ otherwise. 
Let us further define measures $\mu^{\om}$ and $\nu^{\om}$ on $V$ by
\begin{align*}
  \mu^{\om}(x) \;\ldef\; \sum_{y \sim x}\, \om(x,y)
  \qquad \text{and} \qquad
  \nu^{\om}(x) \;\ldef\; \sum_{y \sim x}\, \frac{1}{\om(x,y)}.
\end{align*}
Given a speed measure $\th\!: V \to (0, \infty)$ we consider a continuous time  continuous time Markov chain, $X = \{X_t\!: t \geq 0 \}$, on $V$ with generator $\cL_{\th}^{\om}$ acting on bounded functions $f\!:  V \to \bbR$ as
\begin{align} \label{def:L}
  \big(\cL_{\th}^{\om} f)(x)
  \;=\; 
  \frac 1 {\th(x)} \, \sum_{y \sim x} \om(x,y) \, \big(f(y) - f(x)\big).
\end{align}
Then the Markov chain, $X$, is \emph{reversible} with respect to the speed measure $\th$, and regardless of the particular choice of $\th$ the jump probabilities of $X$ are given by $p^{\om}(x,y) \ldef \om(x,y) / \mu^{\om}(x)$, $x,y \in V$, and the various random walks corresponding to different speed measures will be time-changes of each other.  The maybe most natural choice for the speed measure is $\th \equiv \th^\om = \mu^\om$, for which we obtain the \emph{constant speed random walk} (CSRW) that spends i.i.d.\ $\mathop{\mathrm{Exp}}(1)$-distributed waiting times at all visited vertices.  Another frequently arising choice for $\th$ is the counting measure, i.e.\ $\th(x) = 1$ for all $x \in V$, under which the random walk waits at $x$ an exponential time with mean $1/\mu^\om(x)$.  Since the law of the waiting times does  depend on the location, $X$ is also called the \emph{variable speed random walk} (VSRW).  

For any choice of $\th$ we denote by $\Prob_x^{\om}$ the law of the process $X$ starting at the vertex $x \in V$.  For $x, y \in V$ and $t \geq 0$ let $p_{\th}^{\om}(t,x,y)$ be the transition densities of $X$ with respect to the reversible measure (or the \emph{heat kernel} associated with $\cL_\th^{\om}$), i.e.\
\begin{align*}
  p_{\th}^{\om}(t,x,y)
  \;\ldef\;
 \frac{\Prob_x^{\om}\big[X_t = y\big]}{\th(y)}.
\end{align*}
As one of our main results we establish upper bounds on the heat kernel under a certain integrability condition on the conductances, see Theorem~\ref{thm:hke} below. The resulting bounds are of Gaussian type apart from an additional factor which may vanish for specific choices of the speed measure or the conductances (see Remark~\ref{rem:hke} below). It is well known that Gaussian bounds hold, for instance, for the CSRW on  infinite weighted graphs with bounded vertex degree in the uniformly elliptic case, that is $c^{-1} \leq \om(e) \leq c$ for all $e\in E$ for some $c\geq 1$, see \cite{De99}. More recently, Folz showed in \cite{Fo11} upper Gaussian estimates for elliptic random walk for general speed measures that need to be bounded away from zero, provided on-diagonal upper bounds at two vertices are given. In \cite{ADS16a} we weakened the uniform ellipticity condition and showed heat kernel upper bounds for the CSRW and VSRW under a similar integrability condition as in Theorem~\ref{thm:hke}, while in the present paper we extend this result to general speed measures. Notice that some integrability assumption on the conductances is necessary for Gaussian bounds to hold. In fact, it is well known that  due to a trapping phenomenon  under random i.i.d.\ conductances with sufficiently heavy tails at the zero the subdiffusive heat kernel decay may occur, see \cite{BBHK08, BO12} and  cf.\ \cite{BKM15}.  For the proof of Theorem~\ref{thm:hke} we use the same strategy as in \cite{ADS16a} which is based on a combination of Davies' perturbation method (cf.\ e.g.\ \cite{Da89, Da93, CKS87}) with a Moser iteration following an idea in \cite{Zh11}. We refer to \cite[Section~1.2]{ADS16a} for a more detailed outline of the method.

Naturally, the heat kernel upper bounds  in Theorem~\ref{thm:hke} are  governed by the distance function  $d_{\th}^{\om}$ on $V \times V$ defined by
\begin{align} \label{eq:def_chemdist}
  d_{\th}^{\om}(x,y)
  \;\ldef\;
  \inf_{\gamma\in \Gamma_{xy}}
  \Bigg\{
    \sum_{i=0}^{l_{\gamma}-1} 
    \bigg( 
      1 \wedge \frac{\th(z_i) \wedge \th(z_{i+1})}{\om(z_i,z_{i+1})}
    \bigg)^{\!\!1/2}
  \Bigg\},
\end{align}
where $\Ga_{xy}$ is the set of all nearest-neighbor paths $\ga = (z_0, \ldots, z_{l_\ga})$  connecting $x$ and $y$ (cf.\ \cite{Da93, BD10, Fo11, Mo09, ADS16a}).  Note that $d_{\th}^{\om}$ is a metric which is adapted to the transition rates and the speed measure of the random walk. Further, for the CSRW, i.e.\ $\th \equiv \th^{\om} = \mu^{\om}$, the metric $d_{\th}^{\om}$ coincides with the usual graph distance $d$.  In general, $d_{\th}^{\om}$ can be identified with the intrinsic metric generated by the Dirichlet form associated with $\cL_{\th}^{\om}$ and $X$, see Proposition~\ref{prop:intr_metric} below.  Further, notice that $d_\theta^\om(x,y) \leq d(x,y)$ for all $x,y \in V$. In fact, the distance $d_\theta^\om$ can become much smaller than the graph distance, see \cite[Lemma~1.12]{ADS16a} for an example in the context of a VSRW under random conductances. As our second main result, stated in Theorem~\ref{thm:dist} below, for any $x,y\in V$ sufficiently far apart, we provide under a suitable integrability condition on $\om$ a lower bound on $d_\theta^\om(x,y)$ in terms of a certain power of $d(x,y)$. This lower bound turns out to be optimal within our general framework up to an arbitrarily small correction in the exponent.  

The rest of the paper is organised as follows.   In Section~\ref{sec:comp_dist} we prove a lower bound on $d_\theta^\om$ in terms of the graph distance and we discuss its optimality by providing an example in the context of the random conductance model on $\bbZ^d$.  In Section~\ref{sec:UHK} we show the heat kernel upper bounds. Throughout the paper we write $c$ to denote a positive constant which may change on each appearance. Constants denoted $C_i$ will be the same through each argument.

\section{Comparison result for the intrinsic metric and its optimality}
\label{sec:comp_dist}

\subsection{Preliminaries} \label{sec:prelim}
The graph $G$ is endowed with the counting measure, i.e.\ the measure of $A \subset V$ is simply the number $|A|$ of elements in $A$.  Further, we denote by $B(x,r)$ the closed ball with center $x$ and radius $r$ with respect to the natural graph distance $d$, that is $B(x,r) \ldef \{y \in V \mid d(x,y) \leq r\}$. Throughout the paper we will make the following assumption on $G$.
\begin{assumption}\label{ass:graph}
  The graph $G$ satisfies the following conditions.
  \begin{itemize}
  \item[(i)] Uniformly bounded vertex degree, that is there exists $C_{\mathrm{deg}} \in [1, \infty)$ such that
    \begin{align} \label{eq:ass:degree}
      |\{ y: y \sim x \}| 
      \;\leq\; 
      C_{\mathrm{deg}}, 
      \qquad \forall x \in V.
    \end{align}
  \item[(ii)] Volume regularity of order $d$ for large balls, that is there exist $d \geq 2$ and $C_{\mathrm{reg}} \in (0, \infty)$ such that for all $x \in V$ there exists $N_1(x) < \infty$ with
    \begin{align}\label{eq:ass:vd}
      C^{-1}_{\mathrm{reg}}\, n^d  \;\leq\; |B(x,n)| \;\leq\; C_{\mathrm{reg}}\, n^d,
      \qquad \forall\, n \geq N_1(x). 
    \end{align}
  \item[(iii)] Local Sobolev inequality $(S_{d'}^1)$ for large balls, that is there exists $d' \geq d$ and $C_{\mathrm{S_1}} \in (0, \infty)$ such that for all $x \in V$ the following holds. There exists $N_2(x) < \infty$ such that for all  $n \geq N_2(x)$, 
    \begin{align}\label{eq:ass:riso}
      \Bigg(\sum_{y \in B(x,n)}\! |u(y)|^{\frac{d'}{d'-1}}\Bigg)^{\!\!\frac{d'-1}{d'}}
      \;\leq\;
      C_{\mathrm{S_1}}\, n^{1 - \frac{d}{d'}}\mspace{-6mu}
      \sum_{\substack{y \vee z \in B(x,n)\\ \{y,z\} \in E}}\mspace{-6mu}
      \big|u(y) - u(z) \big|
    \end{align}
    for all $u\! : V \to \bbR$ with $\supp u \subset B(x,n)$.
  \end{itemize}
\end{assumption}
\begin{remark}\label{rem:graph:Zd}
  The Euclidean lattice, $(\bbZ^d, E_d)$, satisfies the Assumption~\ref{ass:graph} with $d' = d$ and $N_1(x) = N_2(x) = 1$. Further, if Assumption~\ref{ass:graph} holds with $N_1(x) = N_2(x) = 1$ for all $x\in V$, then Gaussian bounds hold on the unweighted graph.
\end{remark}
For $f\!: V \to \bbR$ we define the operator $\nabla$ by
\begin{align*}
  \nabla f\!: E \to \bbR,
  \qquad
  E \ni e \;\longmapsto\; \nabla f(e) \;\ldef\; f(e^+) - f(e^-),
\end{align*}
where for each non-oriented edge $e \in E$ we specify one of its two endpoints as its initial vertex $e^+$ and the other one as its terminal vertex $e^-$.  Further, the corresponding adjoint operator $\nabla^*F\!: V \to \bbR$ acting on functions $F\!: E \to \bbR$ is defined in such a way that $\langle \nabla f, F \rangle_{\ell^{^2}\!(E)} = \langle f, \nabla^* F \rangle_{\ell^{^2}\!(V)}$ for all $f \in \ell^2(V)$ and $F \in \ell^2(E)$.  Notice that in the discrete setting the product rule reads
\begin{align}\label{eq:rule:prod}
  \nabla(f g)
  \;=\;
  \av{f} \nabla g + \av{g} \nabla f,
\end{align}
where $\av{f}(e) \ldef \frac{1}{2} (f(e^+) + f(e^-))$.  On the weighted Hilbert space $\ell^2(V, \th)$ the \emph{Dirichlet form} associated with $\cL^{\om}_{\th}$ is given by
\begin{align} \label{eq:def:dform}
  \cE^{\om}(f,g)
  \;\ldef\;
  \scpr{f}{-\cL^{\om} g}{V, \theta}
  \;=\;
  \scpr{\nabla f}{\om \nabla g}{E}
  \;=\;
  \scpr{1}{\md \Ga^{\om}(f,g)}{E},
\end{align}
where $\md \Ga^{\om}(f,g) \ldef \om \nabla f \nabla g$ and $\cE^{\om}(f) = \cE^{\om}(f,f)$. 
\smallskip

As a first step, we identify the metric $d^\om_{\th}$ as the intrinsic metric of the Dirichlet form $ \cE^{\om}$ on $\ell^2(V, \th)$. 
\begin{prop} \label{prop:intr_metric}
  For every $x, y \in V$,
  \begin{align*}
    d_{\th}^{\om}(x,y)
    \;=\; 
    \sup 
    \Big\{ 
      \psi(y) - \psi(x) : 
      \|\nabla \psi \|_{\infty} \leq 1,\,
      \md \Ga^{\om}(\psi, \psi)(e) \leq \th(e^+) \wedge \th(e^-),
      e \in E  
    \Big\}.
  \end{align*}
\end{prop}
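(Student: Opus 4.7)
The plan is to prove the two inequalities ``$\leq$'' and ``$\geq$'' separately. A useful preliminary observation is that, since $\md\Ga^{\om}(\ps,\ps)(e) = \om(e)\,(\nabla\ps(e))^2$, the two constraints on $\ps$ can be combined into the single pointwise bound
\[
  |\nabla\ps(e)|
  \;\leq\;
  \Big(1 \wedge \tfrac{\th(e^+)\wedge\th(e^-)}{\om(e)}\Big)^{\!1/2},
  \qquad e \in E,
\]
which is precisely the edge weight appearing in the definition \eqref{eq:def_chemdist} of $d_\th^\om$.

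For the upper bound ``$\sup \leq d_\th^\om(x,y)$'', I would fix any admissible $\ps$ and any nearest-neighbour path $\ga = (z_0,\dots,z_{l_\ga}) \in \Ga_{xy}$, then telescope $\ps(y) - \ps(x) = \sum_{i=0}^{l_\ga-1}\nabla\ps(\{z_i,z_{i+1}\})$ and apply the previous pointwise bound edge by edge. The resulting estimate, after taking the infimum over $\ga$ and then the supremum over $\ps$, yields exactly the desired inequality.

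For the reverse inequality I would exhibit an explicit extremiser, the natural candidate being $\ps(\cdot) \ldef d_\th^\om(x,\cdot)$, which trivially satisfies $\ps(y) - \ps(x) = d_\th^\om(x,y)$, so everything reduces to verifying admissibility. Since $d_\th^\om$ is an infimum over concatenable paths, it satisfies the triangle inequality, hence $|\ps(v) - \ps(u)| \leq d_\th^\om(u,v)$ for all $u,v \in V$. For any edge $e = \{u,v\} \in E$, the single-edge path already gives $d_\th^\om(u,v) \leq \bigl(1 \wedge (\th(u)\wedge\th(v))/\om(u,v)\bigr)^{1/2}$, and this implies simultaneously $\|\nabla\ps\|_\infty \leq 1$ and $\md\Ga^\om(\ps,\ps)(e) \leq \th(e^+)\wedge\th(e^-)$.

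The argument is essentially routine; the only real ingredients are the triangle inequality for $d_\th^\om$ (a direct consequence of concatenating paths) and the trivial upper bound on $d_\th^\om(u,v)$ provided by the single-edge path between neighbours. I do not foresee any serious obstacle, as the proposition is the familiar duality between a path metric and the class of functions which are ``$1$-Lipschitz'' in the appropriate weighted sense.
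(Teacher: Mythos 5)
Your proposal is correct and follows essentially the same route as the paper's proof: both directions rely on merging the two constraints into the single pointwise bound $|\nabla\ps(e)| \leq \big(1 \wedge (\th(e^+)\wedge\th(e^-))/\om(e)\big)^{1/2}$, telescoping along a path for the upper bound on the supremum, and exhibiting $\ps = d_\th^\om(x,\cdot)$ as the extremiser (checked admissible via the triangle inequality and the single-edge bound) for the reverse inequality.
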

\begin{proof}
  We follow the argument in \cite[Proposition~10.4]{Mo09}. For any $x, y \in V$ set
  \begin{align*}
    \De_{\th}^{\om}(x,y)
    \ldef
    \sup 
    \Big\{ 
      \psi(y) - \psi(x) : 
      \|\nabla \psi \|_{\infty} \leq 1,\,
      \md \Ga^{\om}(\psi, \psi)(e) \leq \th(e^+) \wedge \th(e^-),
      e \in E  
    \Big\}.
  \end{align*}
  Then, for any function $\psi\!: V \to \bbR$ with the properties that $\|\nabla \psi \|_{\infty} \leq 1$ and $\md \Ga^{\om}(\psi,\psi)(e) \leq \th(e^+) \wedge \th(e^-)$ for all $e \in E$ we obtain
  \begin{align*}
    \nabla \psi(e) 
    \;\leq\;  
    \bigg( 
      1 \wedge \frac{\th(e^+) \wedge \th(e^-)}{\om(e)} 
    \bigg)^{\!\!1/2}.
  \end{align*}
  Let $\ga \in \Ga_{x,y}$ be a nearest neighbour path connecting $x$ and $y$.  By summing over all consecutive vertices in $\ga$, we get that $\psi(y) - \psi(x) = \sum_{i=0}^{l_{\ga}-1} \psi(z_{i+1}) - \psi(z_i)$. Thus,
  \begin{align*}
    \De_{\th}^{\om}(x,y)
    \;\leq\;
    d_{\th}^{\om}(x,y).
  \end{align*}
  In order to obtain $\De_{\th}^{\om}(x,y) \geq d_{\th}^{\om}(x,y)$, set $\psi(z) \ldef d_{\th}^{\om}(x,z)$ for all $z \in V$. Then, for any edge $e \in E$ an application of the triangle inequality and the definition of $d_{\th}^{\om}$ yields 
  \begin{align*}
    \big| \nabla \psi(e) \big| 
    \;\leq\; 
    \big| d_{\th}^{\om}(x, e^+) - d_{\th}^{\om}(x, e^-) \big| 
    \;\leq\;
    d_{\th}^{\om}(e^+, e^-) 
    \;\leq\; 
    1.
  \end{align*}
  Likewise, it follows that, for any $e \in E$,
  \begin{align*}
    \md \Ga^{\om}(\psi, \psi)(e)
    \;\leq\;
    \om(e)\, d_{\th}^{\om}(e^+, e^-)^2
    \;\leq\;
    \th(e^+) \wedge \th(e^-).
  \end{align*}
  Thus, $\psi$ satisfies the requirements in the definition of $\De_{\th}^{\om}(x,y)$. Since $\psi(x) = 0$ we finally have that $d_{\th}^{\om}(x,y) \leq \De_{\th}^{\om}(x,y)$.
\end{proof}
For some $\phi\!: V \to [0, \infty)$, $p \in [1, \infty)$ and any non-empty, finite $B \subset V$, we define space-averaged weighted $\ell^{p}$-norms on functions $f\!: B \to \bbR$ by 
\begin{align*}
  \Norm{f}{p, B, \phi}
  \;\ldef\;
  \bigg(
    \frac{1}{|B|}\; \sum_{x \in B}\, |f(x)|^p\, \phi(x)
  \bigg)^{\!\!1/p}
  \qquad \text{and} \qquad
  \Norm{f}{\infty, B} \;\ldef\; \max_{x \in B} |f(x)|.
\end{align*}
If $\phi \equiv 1$, we simply write $\|f\|_{p, B} \ldef \|f\|_{p, B, \phi}$.

\subsection{Lower bound on $d_\theta^\om$}
As our first main result we show that on a large scale the metric $d_{\th}^{\om}$ can be bounded from below by a certain power of the graph distance $d$.
\begin{theorem} \label{thm:dist}
  Let $p > (d-1)/2$ and assume that for any $x\in V$,
  \begin{align} \label{eq:ass_dist}
    m_{p} 
    \;\ldef\; 
    \limsup_{n \to \infty} \Norm{1 \vee \mu^{\om} / \th}{p, B(x, n)}
    \;<\;
    \infty.
  \end{align}
  Then, there exists $c(m_{p}) > 0$ such that the following holds. For every $x \in V$ there exists $N_3(\om, x) < \infty$ such that  for any $y \in V$ with $d(x,y) \geq N_3(\om, x)$,
  \begin{align}
    d_{\th}^{\om}(x,y)
    \;\geq\;
    c(m_p)\, d(x,y)^{1-\frac{d-1}{2p}}.
  \end{align}
\end{theorem}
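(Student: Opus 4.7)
The plan is to bound any path appearing in the infimum defining $d_{\th}^{\om}(x,y)$ from below by Hölder's inequality, trading the combinatorial length of the path against an integrated $p$-moment of the conductances along it. Fix a nearest-neighbour simple path $\ga = (z_0, \ldots, z_{l_\ga})$ from $x$ to $y$, and set $a_i \ldef \bigl(1 \wedge (\th(z_i) \wedge \th(z_{i+1}))/\om(z_i, z_{i+1})\bigr)^{1/2}$, so that the quantity in \eqref{eq:def_chemdist} to be bounded below is $\sum_{i=0}^{l_\ga - 1} a_i$. Writing $l_\ga = \sum_i a_i^{2p/(2p+1)} \cdot a_i^{-2p/(2p+1)}$ and applying Hölder's inequality with conjugate exponents $(2p+1)/(2p)$ and $2p+1$, I obtain
\begin{align*}
  l_\ga
  \;\leq\;
  \Bigl(\sum_{i} a_i\Bigr)^{\!2p/(2p+1)}\,
  \Bigl(\sum_{i} a_i^{-2p}\Bigr)^{\!1/(2p+1)},
\end{align*}
which after rearrangement produces the key lower bound $\sum_{i} a_i \geq l_\ga^{(2p+1)/(2p)}\,\bigl(\sum_i a_i^{-2p}\bigr)^{-1/(2p)}$.

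The next step is to control the second sum by a volume-average over a ball. Since $\om(z_i, z_{i+1}) \leq \mu^\om(z_i) \wedge \mu^\om(z_{i+1})$, we have
\begin{align*}
  a_i^{-2p}
  \;=\;
  \Bigl(1 \vee \tfrac{\om(z_i, z_{i+1})}{\th(z_i) \wedge \th(z_{i+1})}\Bigr)^{\!p}
  \;\leq\;
  \bigl(1 \vee \mu^\om(z_i)/\th(z_i)\bigr)^{\!p}
  + \bigl(1 \vee \mu^\om(z_{i+1})/\th(z_{i+1})\bigr)^{\!p}.
\end{align*}
As $\ga$ is simple, each vertex of $\ga$ contributes to at most two edges, so $\sum_i a_i^{-2p} \leq 2 \sum_{z \in \ga} \bigl(1 \vee \mu^\om(z)/\th(z)\bigr)^p$. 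The path starts at $x$ and consists of $l_\ga$ edges, hence lies in $B(x, l_\ga)$, giving
\begin{align*}
  \sum_i a_i^{-2p}
  \;\leq\;
  2\, |B(x, l_\ga)| \cdot \bigl\| 1 \vee \mu^\om/\th \bigr\|_{p, B(x, l_\ga)}^{p}.
\end{align*}
Here the restriction to simple paths is harmless for the infimum in \eqref{eq:def_chemdist} because each edge contributes strictly positively.

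To finish I would invoke the volume regularity in Assumption~\ref{ass:graph}(ii) together with the integrability hypothesis \eqref{eq:ass_dist}: choose $N_3(\om, x)$ so large that for every $n \geq N_3(\om, x)$ both $|B(x, n)| \leq C_{\mathrm{reg}}\, n^d$ holds and $\| 1 \vee \mu^\om/\th\|_{p, B(x, n)}^{p} \leq 2 m_p^{p}$ (this combines $N_1(x)$ from Assumption~\ref{ass:graph} with the definition of the $\limsup$). Then, whenever $l_\ga \geq N_3(\om, x)$, substituting the resulting estimate $\sum_i a_i^{-2p} \leq 4 C_{\mathrm{reg}}\, m_p^{p}\, l_\ga^{d}$ into the Hölder bound yields
\begin{align*}
  \sum_i a_i
  \;\geq\;
  c(m_p)\, l_\ga^{1 - (d-1)/(2p)}.
\end{align*}
Since the hypothesis $p > (d-1)/2$ makes the exponent strictly positive, the right-hand side is increasing in $l_\ga$, and as $l_\ga \geq d(x,y) \geq N_3(\om, x)$ we obtain the claimed lower bound after taking the infimum over simple paths. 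The heart of the argument is the one-line Hölder inequality tuned exactly to the $p$-moment available in \eqref{eq:ass_dist}; the only mildly delicate points are the restriction to simple paths and turning the qualitative $\limsup$ condition into a uniform bound for all sufficiently large radii.
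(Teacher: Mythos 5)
Your proof is correct and follows essentially the same route as the paper: lower-bound the sum of edge weights $a_i$ along any path by its combinatorial length against a Hölder-dual $p$-moment, then control that moment via the integrability of $1\vee\mu^\omega/\theta$ over $B(x,l_\gamma)$ together with volume regularity. The only cosmetic difference is that the paper does this in two steps (Jensen applied to $t\mapsto t^{-1/2}$ after replacing $a_i^{-2}$ by $m_\theta^\omega(z_i)\vee m_\theta^\omega(z_{i+1})$, followed by a Hölder inequality over the ball), whereas your single Hölder application with conjugate exponents $(2p+1)/(2p)$ and $2p+1$ packages both steps into one, landing on the same exponent $1-(d-1)/(2p)$.
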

\begin{proof}
  In order to simplify notation, set $m_{\th}^{\om}(x) \ldef 1 \vee \mu^{\om}(x) / \th(x)$ for $x \in V$.  Since the function $t \mapsto 1/\sqrt{t}$ is convex, an application of the Jensen inequality yields
  \begin{align*}
    d_{\th}^{\om}(x,y)
    \;\geq\;
    \inf_{\ga \in \Ga_{x,y}} 
    l_{\ga}\,
    \Bigg(
      \frac{1}{l_{\ga}} 
      \sum_{i=0}^{l_{\ga}-1} m_{\th}^{\om}(z_i) \vee m_{\th}^{\om}(z_{i+1}) 
    \Bigg)^{\!\!-1/2}\mspace{-28mu}.
  \end{align*}
  Moreover, for any $p > (d-1)/2$, an application of H\"older's inequality yields 
  \begin{align*}
    \frac{1}{l_{\ga}} 
    \sum_{i=0}^{l_{\ga}-1} m_{\th}^{\om}(z_i) \vee m_{\th}^{\om}(z_{i+1})
    &\;\leq\;
    \frac{2 |B(x, l_{\ga})|}{l_{\ga}}\; 
    \Norm{\indicator_{\ga}\, m_{\th}^{\om}}{1, B(x, l_{\ga})}
    \\[.5ex]
    &\;\leq\;
    2\, \bigg(\frac{|B(x, l_{\ga})|}{|l_{\ga}|}\bigg)^{\!\!1/p}\,
    \Norm{m_{\th}^{\om}}{p, B(x, l_{\ga})}.
  \end{align*}
  By combining the estimates and using \eqref{eq:ass:vd} 
  there exists $c(m_{p}) > 0$ and $N_3(\om, x) < \infty$ such that for any $y \in V$ with $d(x,y) \geq N_3(\om, x)$,
  \begin{align*}
    d_{\th}^{\om}(x,y)
    \;\geq\;
    c(m_{p})\,
    \inf_{\ga \in \Ga_{x,y}} (l_{\ga})^{1-\frac{d-1}{2p}}
    \;=\;
    c(m_p)\, d(x,y)^{1-\frac{d-1}{2p}},
  \end{align*}
  where we used in the last step that $p > (d-1)/2$.
\end{proof}

\subsection{Optimality of the bound in Theorem~\ref{thm:dist}} \label{sec:example}
In this subsection we provide an example for which the lower bound in Theorem~\ref{thm:dist} is attained up to an arbitrarily small correction in the exponent.  For this purpose, consider the $d$-dimensional Euclidean lattice $(\bbZ^d, E_d)$ with $d \geq 2$, where $E_d$ denotes the set of all non-oriented nearest neighbour bonds.  As pointed out in Remark~\ref{rem:graph:Zd}, $(\bbZ^d, E_d)$ satisfies the Assumption \ref{ass:graph}.  Further, let $\prob$ be a probability measure on the measurable space $(\Om, \cF) = \big(\bbR_+^{E_d}, \cB(\bbR_+)^{\otimes\, E_d}\big)$ and write $\mean$ for the expectation with respect to $\prob$.  The space shift by $z \in \bbZ^d$ is the map $\tau_z\!: \Om \to \Om$ defined by $ (\tau_z \om)(\{x, y\}) \ldef \om(\{x+z, y+z\})$ for all  $\{x,y\} \in E_d$.  Now assume that $\prob$ satisfies the following conditions.
\begin{enumerate}[(i)]
\item $\prob$ is ergodic with respect to translations of $\bbZ^d$, i.e.\ $\prob \circ\, \tau_x^{-1} \!= \prob\,$ for all $x \in \bbZ^d$ and $\prob[A] \in \{0,1\}\,$ for any $A \in \cF$ such that $\tau_x(A) = A\,$ for all $x \in \bbZ^d$.
\item There exist $p>(d-1)/2$ such that $\mean[\om(e)^p] < \infty$ for any $e \in E_d$.
\end{enumerate}
Then, the spatial ergodic theorem gives that for $\prob$-a.e.\ $\om$,
\begin{align*}
  \lim_{n \to \infty} \Norm{\mu^{\om}}{p, B(n)}^p
  \;=\;
  \mean\!\big[ \mu^{\om}(0)^p\big]
  \;<\;
  \infty.
\end{align*}
In particular, by choosing $\th \equiv 1$, the assumption \eqref{eq:ass_dist} in Theorem~\ref{thm:dist} is fulfilled for $\prob$-a.e.\ $\om$ and the lower bound on $d_\theta^\om$ holds. Nevertheless, for general ergodic environments we cannot control the size of the random variable $N_3(x)$, $x \in \bbZ^d$, as this requires some information on the speed of convergence in the ergodic theorem. However, if we additionally assume, for instance, that the environment satisfies a  concentration inequality in form of a spectral gap inequality w.r.t.\ the so-called vertical derivative, then $\mean[N_3(x)^n]<\infty$ provided a stronger moment condition holds (depending on $n$), see Assumption~1.3 and Lemma~2.10 in \cite{AN17}.
\begin{theorem} \label{thm:example_dist}
  Consider the VSRW, i.e.\ $\th \equiv 1$. For any $p > 1$ there exists an environment of ergodic random conductances $\{\om(e) : e \in E_d\}$ on $(\bbZ^d, E_d)$ satisfying $\mean[\om(e)^p] < \infty$ such that for any $\al > p$ and $\prob$-a.e.\ $\om$ the following hold. 
  \begin{enumerate}[(i)]
  \item Suppose $d = 2$.  There exists $L_0 = L_0(\om) < \infty$ such that for all $L \geq L_0$ there exists $x = x(\om) \in \bbZ^d$ with $d(0,x) = L$ and
    \begin{align*}
      d_{\th}^{\om}(0, x)
      \;\leq\;
      c\, d(0, x)^{1 - \frac{d-1}{2\al}}.
    \end{align*}
  \item Suppose $d \geq 2$.  There exists $L_0 = L_0(\om) < \infty$ such that for all $L \geq L_0$ there exist $x = x(\om), y = y(\om) \in [-2L,2L]^d$ with $d(x,y) = L$ and
    \begin{align*}
      d_{\th}^{\om}(x, y)
      \;\leq\;
      c\, d(x, y)^{1-\frac{d-1}{2\al}}.
    \end{align*}
  \end{enumerate}
\end{theorem}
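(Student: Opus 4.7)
The plan is to exhibit an ergodic random environment with $\mean[\om(e)^p]<\infty$ that carries, at every dyadic scale, straight ``highway'' segments of enhanced conductance along which the intrinsic metric matches the lower bound of Theorem~\ref{thm:dist} up to an arbitrarily small correction in the exponent. Fix $L_n \ldef 2^n$ and a sequence $\ep_n\to 0$ slowly enough that $\sum_n n^2\, L_n^{-p\ep_n} < \infty$ (for instance $\ep_n = (\log n)^2/n$). Set $M_n \ldef L_n^{(d-1)/p-\ep_n}$ and $p_n \ldef n^2 L_n^{-d}$. For each triple $(n,y,i)$ with $n\ge 1$, $y\in\bbZ^d$ and $i\in\{1,\dots,d\}$, let $X_{n,y,i}$ be an independent $\mathrm{Ber}(p_n)$ variable; declare the segment $\ga_{n,y,i} \ldef \{\{y+k\bfe_i, y+(k+1)\bfe_i\}: 0\le k<L_n\}$ a \emph{scale-$n$ highway} when $X_{n,y,i}=1$, and set $\om(e) \ldef 1\vee\max\{M_n : e\in \ga_{n,y,i},\, X_{n,y,i}=1\}$. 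As a measurable functional of an i.i.d.\ field, $\om$ is stationary and ergodic under $\bbZ^d$-shifts. Only segments at scale $n$ in the direction of $e$ can contain $e$, and there are exactly $L_n$ such segments, so
\[
  \mean\big[\om(e)^p\big] \;\le\; 1 + \sum_n M_n^p\cdot L_n\cdot p_n \;=\; 1 + \sum_n n^2\, L_n^{-p\ep_n} \;<\; \infty.
\]

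To prove (ii) I would first observe that a Chernoff bound applied to $\mathrm{Bin}(L_n^d, p_{n+1})$ (whose mean is of order $n^2$) gives that the probability of having no scale-$(n+1)$ horizontal highway with starting vertex in $[-L_n,0]^d$ decays faster than $e^{-cn^2}$ and is therefore summable. Borel--Cantelli then produces $N_0(\om)<\infty$ such that for every $n\ge N_0$ there exists $y_{n+1}\in[-L_n,0]^d$ with $X_{n+1,y_{n+1},1}=1$. Given $L\ge L_{N_0}$, pick $n\ge N_0$ with $L_n\le L<L_{n+1}$ and set $x\ldef y_{n+1}$, $y\ldef y_{n+1}+L\bfe_1$: both points lie in $[-L,L]^d\subset[-2L,2L]^d$, $d(x,y)=L$, and the $L$ consecutive edges joining them along $\ga_{n+1,y_{n+1},1}$ all carry conductance at least $M_{n+1}$, so
\[
  d_\th^\om(x,y) \;\le\; L\cdot M_{n+1}^{-1/2} \;=\; L\cdot L_{n+1}^{-(d-1)/(2p)+\ep_{n+1}/2}.
\]
For any $\al>p$, since $\ep_n\to 0$ we have $\ep_n<(d-1)(1/p-1/\al)$ for every sufficiently large $n$, whence $M_{n+1}^{-1/2}\le L^{-(d-1)/(2\al)}$, and hence $d_\th^\om(x,y)\le d(x,y)^{1-(d-1)/(2\al)}$.

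The main obstacle is part (i), where one endpoint is anchored at $0$. With the parameters above, the expected number of scale-$n$ highway starts within distance $o(L_n)$ of the origin vanishes as $n\to\infty$, so the edge-moment condition prevents the naive construction from placing highways close to $0$ infinitely often; any proof must employ a substantially more elaborate multi-scale construction. The plan is to refine the highway ensemble hierarchically so that from $0$ one can chain a scale-$1$, a scale-$2$, \ldots, and finally a scale-$n$ traversal, incurring a total cost $\sum_{m=1}^n O(L_m^{1-1/(2\al)}) = O(L_n^{1-1/(2\al)})$ by geometric summation. The restriction to $d=2$ is essential: a codimension-one straight highway of length $L$ meets a given coordinate axis with probability of order $L\cdot p_n$, a regime that is just barely compatible with $\mean[\om(e)^p]<\infty$. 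Navigating this balance, while preserving ergodicity and moment control, is the technical heart of case~(i).
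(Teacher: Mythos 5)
Your part~(ii) argument is essentially sound and uses a genuinely different construction from the paper. The paper's construction is a \emph{layered} conductance field: $\om(\{x,x\pm\bfe_i\}) = Y(i,\hat x^i)$ where $\hat x^i\in\bbZ^{d-1}$ strips the $i$-th coordinate, so conductances are constant along coordinate lines but i.i.d.\ across lines, with a polynomial tail of index $\al_0>p$. You instead build a multi-scale Bernoulli ``highway'' field. Both satisfy the required moment bound, and for part~(ii) both exploit the same basic idea — somewhere in a box of side $L$ there is, with high probability, a long straight segment of conductance roughly $L^{(d-1)/p-o(1)}$ — though the paper extracts this from extreme-value theory for the maximum of order $L^{d-1}$ i.i.d.\ variables in a box, while you extract it from a Chernoff--Borel--Cantelli count of your planted highway starts. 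Your moment computation $\sum_n n^2 L_n^{-p\ep_n}<\infty$ and the chain of inequalities leading to $d_\th^\om(x,y)\le L^{1-(d-1)/(2\al)}$ are correct.

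However, you have a genuine gap: part~(i) is not proved. You explicitly acknowledge that with your parameters the expected number of scale-$n$ highway starts within distance $o(L_n)$ of the origin tends to zero, so your construction as written does not place anchored highways near $0$ infinitely often, and you only sketch an unverified hierarchical fix. This is precisely where the paper's layered construction pays off: from the fixed starting point $0$ one follows a \emph{greedy} nearest-neighbour path $x_{n+1}=x_n+\bfe_{i_{n+1}}$ choosing at each step the direction of largest conductance. Because conductances are constant along lines, each step reveals $d-1$ \emph{fresh} i.i.d.\ values, so the running maximum $M_n$ satisfies $M_n\ge n^{1/\al}$ eventually a.s.\ (via \cite[Theorem~3.5.2]{EKM97}), and an Abel/record-time summation bounds $\sum_{n\le L}\om(\{x_n,x_{n+1}\})^{-1/2}$ by $c\,L^{1-1/(2\al)}$. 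Note also that your explanation of why $d=2$ is needed in (i) (``codimension-one highway meets a given coordinate axis'') does not match the paper's actual obstruction: along a greedy path from a fixed vertex only $O(n)$ i.i.d.\ values are seen, so the running maximum grows like $n^{1/\al}$ rather than $n^{(d-1)/\al}$, and thus the achievable exponent $1-1/(2\al)$ equals the target $1-(d-1)/(2\al)$ only when $d=2$. To repair your proposal you would need either to implement and verify the hierarchical chaining you describe, or to switch to a construction that places arbitrarily long straight high-conductance segments through a neighbourhood of a fixed vertex while respecting the moment bound — which is exactly what the layered field achieves.
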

\begin{proof}
  Let $\{ Y(i, y) : i \in \{1, \ldots, d\}, y \in \bbZ^{d-1} \}$ be a family of non-negative, independent and identically distributed random variables such that
  \begin{align*}
    \prob\!\big[Y(i,y) > r\big] \;=\; r^{-\al_0 + o(1)}
    \qquad \text{as $r \to \infty$}
  \end{align*}
  for some $\al_0 > p$.  For any $x \in \bbZ^d$ we write $\hat{x}^i$ to denote the element of $ \bbZ^{d-1}$ obtained by removing the $i$-th component from $x$. Further, set
  \begin{align*}
    \om(\{x, x \pm \boldsymbol{e}_i\})
    \;\ldef\;
    Y(i, \hat{x}^{i}),
    \qquad \forall\, x \in \bbZ^d,\; i \in \{1, \ldots, d\},
  \end{align*}
  where $\{\boldsymbol{e}_1, \ldots, \boldsymbol{e}_d \}$ denotes the canonical basis in $\bbR^d$.  Then, note that the conductances are constant along the lines, but independent between different lines.  W.l.o.g.\ we further assume that $\om(e) \geq 1$ for any $e \in E_d$. We refer to \cite[Example~1.11]{ADS16a} and \cite[Section~2.2]{DF16} for a similar but different example for a model with layered random conductances.

  \textit{(i)} Consider the nearest-neighbour path $(x_n : n \geq 0)$ on $\bbN_0^d$ defined by $x_0 \ldef 0$, $x_{n+1} \ldef x_n + \boldsymbol{e}_{i_{n+1}}$ with
  \begin{align*}
    i_1 
    \;\ldef\; 
    \underset{i = 1,\ldots,d-1}{\argmax}\, \om(\{0, \boldsymbol{e}_i\}),
    \qquad
    i_{n+1}
    \;\ldef\; 
    \underset{i = 1, \ldots, d}{\argmax}\; \om(\{x_n, x_n + \boldsymbol{e}_i\}),
    \quad n \geq 1.
  \end{align*}
  In view of the definition of $d_\theta^\om$ in \eqref{eq:def_chemdist} it suffices to show that for any $\al > \al_0$ there exists $L_0 = L_0(\om) < \infty$ such that
  \begin{align} \label{eq:chem_dist_simple}
    \sum_{n=0}^L\, \om(\{x_n, x_{n+1}\})^{-1/2} 
    \;\leq\; 
    c\,
    L^{1-\frac{d-1}{2\al}}, \qquad \forall\, L \geq L_0.
  \end{align}
  For that purpose, set $M_n \ldef \max_{1 \leq k \leq n} \om(\{x_{k-1}, x_{k}\})$ and $u_n \ldef n^{1/\al}$ for  $n \geq 1$.  Then, by construction $M_n$ is the maximum of $n$ i.i.d.\ random variables $Z_1, \ldots, Z_n$ defined by 
  \begin{align*}
    Z_1
    \;\ldef\; 
    \max_{i \in \{1, \ldots,d-1\}} \om(\{0, \boldsymbol{e}_i\}), 
    \qquad 
    Z_k
    \;\ldef\; 
    \max_{i \in \{1,\ldots,d\} \setminus \{i_{k-1}\}} 
    \om(\{x_{k}, x_{k} + \boldsymbol{e}_i\}), 
    \quad k \geq 2.
  \end{align*} 
  An elementary computation shows that $\prob[Z_1 >u_k] \leq (d-1) \prob[\om(e) > u_k] \to 0$, $k \prob[Z_1 > u_k] \geq k \prob[\om(e) > u_k] \to \infty$ as $k \to \infty$ and
  \begin{align*}
    \sum_{k=1}^{\infty} 
    \prob\big[Z_1 > u_k\big]\, \exp\!\Big(\!-\! k \prob\big[Z_1 > u_k\big]\Big) 
    \;\leq\; 
    c \sum_{k=1}^\infty
    k^{-\frac{\al_0}{\al}}\, \exp\!\Big(\!-\!c\, k^{1-\frac{\al_0}{\al}} \Big) 
    \;<\; 
    \infty.
  \end{align*} 
  Thus, by \cite[Theorem~3.5.2]{EKM97} for $\prob$-a.e.\ $\om$ there exists $N_0 = N_0(\om) < \infty$ such that 
  \begin{align} \label{eq:lb_max}
    M_n \;\geq\; n^{1/\alpha}, 
    \qquad \forall\, n \geq N_0.
  \end{align}
  Let $(l_k : k \geq 0)$ be the sequence of record times defined by 
  \begin{align*}
    l_0
    \;\ldef\; 
    0 
    \qquad \text{and} \qquad 
    l_{k+1} 
    \;\ldef\; 
    \min \big\{j > l_k: M_j > M_{l_k} \big\}.
  \end{align*}
  and denote by $N(L)$ the number of records in the interval $\{0, \ldots L\}$.  Recall that
  \begin{align} \label{eq:record_limit}
    \lim_{k \to \infty} \frac{\ln l_k}{k} \;=\; 1, 
    \qquad
    \lim_{L\to \infty} \frac{N(L)}{\ln L} \; = \; 1, \qquad \text{ $\prob$-a.s.}
  \end{align}
  (cf.\ e.g.\ \cite[Section~5.4]{EKM97}).  Set $\hat{M}_k \ldef M_{l_k}$.  Using Abel's summation formula the left-hand side in \eqref{eq:chem_dist_simple} can be rewritten as
  \begin{align*}
    \sum_{n=0}^L \om(\{x_{n}, x_{n+1}\})^{-1/2} 
    &\;\leq\; 
    l_{N(L)}\, \hat{M}_{N(L)}^{-1/2} 
    \,+\, 
    \sum_{k=1}^{N(L)-1}\! l_k\, \big(\hat{M}_k^{-1/2} - \hat{M}_{k+1}^{-1/2}\big).     
  \end{align*}
  By \eqref{eq:lb_max} and \eqref{eq:record_limit} the first term is of order $L^{1-\frac{1}{2\al}}$. Further, we have that $l_k \hat{M}_k^{-1/2} \leq (l_k)^{1-\frac{1}{2\al}} \leq c\, \me^{({1-\frac{1}{2\al}})k}$ for sufficiently large $k$ and therefore
  \begin{align*}
    \sum_{k=1}^{N(L)-1}\! l_k\, \big(\hat{M}_k^{-1/2} - \hat{M}_{k+1}^{-1/2}\big)
    \;\leq\;
    \sum_{k=1}^{N(L)} l_k\, \hat M_k^{-1/2}
    \;\leq\; 
    c\, \me^{( 1 - \frac{1}{2\al}) N(L)} 
    \;\leq\; 
    c\, L^{1 - \frac{1}{2\al}}
  \end{align*}
  for all $L$ larger than some $L_0=L_0(\om)$. Thus, \eqref{eq:chem_dist_simple} is proven.

  \textit{(ii)} In order to show the second statement consider
  \begin{align*}
    e_{L} 
    \;\ldef\; 
    \underset{e\in E_d: e^{-} \in [-L,L]^d}{\argmax}\, \om(e).
  \end{align*}
  Then, by construction $\om(e_L)$ is the maximum of order $L^{d-1}$ i.i.d.\ random variables and again by \cite[Theorem~3.5.2]{EKM97} there exists $L_0=L_0(\om)$ such that $\prob$-a.s.
  \begin{align*}
    \om(e_L) 
    \;\geq\; 
    c\, L^{\frac{d-1}{\alpha}}, 
    \qquad \forall\, L \geq L_0.
  \end{align*}
  For such $L$ set $x \ldef e_L^-$ and consider the nearest-neighbour path $(x_{n} : n \in \bbN_0)$ on $\bbZ^d$ defined by $x_0\ldef x$ and $x_{n+1}\ldef x_{n} + \boldsymbol{e}_{i_{n+1}}$ with
  \begin{align*}
    i_{n+1}
    \;\ldef\; 
    \underset{i = 1, \ldots, d}{\argmax}\, 
    \om(\{x_{n}, x_{n} + \boldsymbol{e}_i\}),
    \qquad n \geq 0,
  \end{align*}
  similarly as above in \textit{(i)}.  Then, by setting $y = x_{L}$, we have $d(x, y) = L$ and  
  \begin{align*}
    \om(\{x_{n},x_{n+1}\}) 
    \;\geq\; 
    \om(e_L) 
    \;\geq\;  
    c\, L^{\frac{d-1}{\al}}, 
    \qquad \forall\, n=0, \ldots, L-1.
  \end{align*}
  In particular, \eqref{eq:chem_dist_simple} holds
  %
  %
  and \textit{(ii)} follows from the definition of $d_{\th}^{\om}$.
\end{proof}

\section{Heat kernel upper bounds} \label{sec:UHK}
We work again in the general setting outlined in Section~\ref{sec:prelim} above.
Our main objective is to prove Gaussian-like upper bound on the heat kernel $p_{\th}$ in term of the intrinsic distance $d_{\th}$.   For that purpose, we impose the following assumption on the integrability of the conductances.
\begin{assumption}\label{ass:lim_const}
  Let $d' \geq d \geq 2$. For $p, q, r \in (1, \infty]$ with
  \begin{align}\label{eq:cond_pqr}
    \frac{1}{r} \,+\, \frac{1}{p} \cdot \frac{r-1}{r} \,+\, \frac{1}{q} 
    \;<\; 
    \frac{2}{d'}
  \end{align}
  there exists $C_{\mathrm{int}} \in [1, \infty)$ such that for all $x \in V$ there exists $N_4(x, \om) < \infty$ such that for all $n \geq N_4(x, \om)$,
  \begin{align}
    \Norm{1 \vee \mu^{\om} / \th}{p, B(x,n), \th} \cdot
    \Norm{1 \vee \nu^{\om}}{q, B(x, n)} \cdot
    \Norm{1 \vee \th}{r, B(x,n)} \cdot
    \Norm{1 \vee 1/\th}{q, B(x,n)}
    \;\leq\;
    C_{\mathrm{int}}.
  \end{align}
\end{assumption}
Similarly as explained at the beginning of Section~\ref{sec:example} above, in the context of the random conductance model with ergodic conductances one can use the ergodic theorem to translate  Assumption~\ref{ass:lim_const} directly into a moment condition, provided the speed measure $\th$ is random and stationary, i.e.\ $\th(x) = \th^{\om}(x) = \th^{\tau_x \om}(0)$ for all $x \in \bbZ^d$.

\begin{theorem} \label{thm:hke}
  Suppose that $\om \in \Om$ satisfies Assumption~\ref{ass:lim_const}.  Then, there exist constants $c_i = c_i(d, p, q, C_{\mathrm{int}})$ and  $\gamma = \gamma(d, p, q, C_{\mathrm{int}})$  such that for any given $t$ and $x$ with $\sqrt{t} \geq N_1(x) \vee N_2(x) \vee N_4(x, \om)$ and all $y \in V$ the following hold.
  \begin{enumerate}
  \item [(i)] If $d_{\th}^{\om}(x, y)\leq c_1 t$ then
    \begin{align*}
      p^{\om}_{\th}(t, x, y)
      \;\leq\;
      c_2\, t^{-d/2}\,
      \bigg( 1 + \frac{d(x,y)}{\sqrt{t}} \bigg)^{\!\!\ga}\, 
      \exp\!\bigg( \!-\! c_3\, \frac{d_{\th}^{\om}(x,y)^2}{t}\bigg).
    \end{align*}
  \item [(ii)] If $d_{\th}^{\om}(x,y) \geq c_5 t$ then
    \begin{align*}
      p^{\om}_{\th}(t, x, y)
      \;\leq\;
      c_2\, t^{-d/2}\,  
      \bigg( 1 + \frac{d(x,y)}{\sqrt{t}} \bigg)^{\!\!\ga}\,
      \exp\!\bigg( 
        \!-\! c_4\, d_{\th}^{\om}(x,y) 
        \bigg(1 \vee \log \frac{d_{\th}^{\om}(x,y)}{t}\bigg)
      \bigg).
    \end{align*}
  \end{enumerate}
\end{theorem}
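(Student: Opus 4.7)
My plan is to follow the Davies perturbation strategy combined with Moser iteration as used in \cite{ADS16a}, with modifications to accommodate the general speed measure $\th$. The starting point is to work with the perturbed semigroup $P_t^{\ps} f(x) \ldef \me^{-\ps(x)}\, \Mean_x[\me^{\ps(X_t)} f(X_t)]$ for a bounded function $\ps\!: V \to \bbR$, whose kernel relative to $\th$ is $\me^{-\ps(x)+\ps(y)} p_{\th}^{\om}(t,x,y)$. Once one shows a bound of the form $\|P_t^\ps\|_{\ell^1(\th) \to \ell^\infty} \leq c\, t^{-d/2} \exp(\La(\ps) t)$ with an explicit $\La(\ps)$ controlled by $\|\nabla\ps\|_\infty$ and the intrinsic norm $\|\md\Ga^\om(\ps,\ps)/\th\|_\infty$, the pointwise estimate $p^{\om}_{\th}(t,x,y) \leq c\, t^{-d/2}\exp(\La(\ps) t - (\ps(y)-\ps(x)))$ follows; taking $\ps(\cdot) = \al\, d^{\om}_{\th}(x,\cdot)$ and optimising over $\al$ yields (i) in the diffusive regime and (ii) after balancing with the $d\,\log(d/t)$ correction in the very long range regime, exactly as in the CSRW/VSRW case.

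The main work is the perturbed $\ell^1$--$\ell^\infty$ bound, which I would establish through a Moser iteration applied to $u_s \ldef P_s^{\ps} f$ with $f \geq 0$ supported in a ball. Differentiating $\|u_s\|_{\ell^p(B,\th)}^p$ and using the product rule \eqref{eq:rule:prod} produces the Dirichlet form of $u_s^{p/2}$ plus a cross term that, invoking Proposition~\ref{prop:intr_metric} and Cauchy--Schwarz, can be absorbed into the good part up to an additive $\La(\ps)\|u_s\|_{\ell^p(B,\th)}^p$ provided $\md \Ga^\om(\ps,\ps)(e) \leq \th(e^+)\wedge\th(e^-)$. This reduces everything to establishing a weighted Sobolev-type inequality of the shape
\begin{align*}
\Norm{u^{2}}{\frac{d'}{d'-2}, B, \th}
\;\leq\;
c\, n^{2}\, \Norm{1 \vee \mu^{\om}/\th}{p, B, \th}\,
\Norm{1 \vee \nu^{\om}}{q, B}\,
\Norm{1 \vee \th}{r, B}\,
\Norm{1 \vee 1/\th}{q, B}\;
\cE^{\om}(u)/|B|,
\end{align*}
valid for $u$ supported in $B=B(x,n)$. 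The local $(S_{d'}^1)$ inequality from Assumption~\ref{ass:graph}(iii) applied to $u^2$, combined with the product rule and H\"older with exponents matched to \eqref{eq:cond_pqr}, yields this inequality; the three weight norms encode the cost of passing between the reference measures $\mu^\om$, $\nu^\om$, $\th$ and the counting measure. Given this weighted Sobolev inequality with a uniform constant on balls of radius $\geq N_1 \vee N_2 \vee N_4$ thanks to Assumption~\ref{ass:lim_const}, the Moser iteration \`a la \cite{Zh11} runs exactly as in \cite[Sect.~3]{ADS16a} and gives $\|u_t\|_{\ell^\infty(B)}^2 \leq c\, t^{-d/2}\exp(2\La(\ps)t)\,\|f\|_{\ell^1(B,\th)}^2$, hence the perturbed kernel bound.

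Finally, to pass from an on-ball bound to an on-lattice bound I would use a Davies--type splitting: write $p_\th^\om(t,x,y) = \sum_z p_\th^\om(t/2,x,z)\th(z)p_\th^\om(t/2,z,y)$ and restrict the sum to a ball of radius $\asymp d(x,y) \vee \sqrt{t}$, bounding the contribution of the complement by a crude first-exit estimate for $X$ (standard once one has heat kernel control on balls, and the source of the polynomial factor $(1+d(x,y)/\sqrt{t})^\ga$).

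The hard part, and the reason this is not a verbatim copy of \cite{ADS16a}, is keeping careful track of the speed measure in two places: first in the $\ell^p(\th)$ setup of Moser's scheme, where the factor $1\vee \mu^\om/\th$ in Assumption~\ref{ass:lim_const} takes the place of $1\vee\mu^\om$ from the CSRW case and must carry through an $\ell^p(\th)$-H\"older step; second in Proposition~\ref{prop:intr_metric}, where the intrinsic constraint $\md\Ga^\om(\ps,\ps)\leq \th(e^+)\wedge\th(e^-)$ (rather than $\leq 1$ as for the VSRW or $\leq\mu^\om$ as for the CSRW) is precisely what makes the intrinsic distance \eqref{eq:def_chemdist} the right quantity controlling the Gaussian exponent. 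Verifying that this scaling is consistent through every step of the perturbation argument is the main obstacle, and is what rectifies and generalises the corresponding statement in \cite{ADS16a}.
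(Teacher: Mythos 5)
Your overall architecture — Davies perturbation driven by a test function $\psi$ subject to the intrinsic constraint $\md\Ga^\om(\psi,\psi)\leq \th(e^+)\wedge\th(e^-)$, combined with a parabolic Moser iteration to produce an $\ell^2\!\to\!\ell^\infty$ maximal inequality for the perturbed equation, then optimisation over the tilt parameter $\la$ via the function $F(s)=\inf_{\la>0}(-\la+s^{-1}(\cosh\la-1))$ — is exactly the route the paper takes. However, two of the intermediate steps as you have written them would not go through in the stated form.

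First, the single weighted Sobolev inequality you propose, carrying all four norms $\Norm{1\vee\mu^\om/\th}{p,B,\th}$, $\Norm{1\vee\nu^\om}{q,B}$, $\Norm{1\vee\th}{r,B}$, $\Norm{1\vee 1/\th}{q,B}$, is not the right decomposition and I do not see how to prove it as stated. In the paper these four factors enter the Moser scheme through three \emph{different} mechanisms: the factor $\Norm{\mu^\om/\th}{p,B,\th}$ comes from the parabolic energy estimate (the analogue of Lemma~\ref{lem:moser_pre}), where the cutoff gradient $\norm{\nabla\eta}{\infty}{E}^2$ is paired against the conductance and then H\"older in $\th$ produces $\mu^\om/\th$ — this has nothing to do with a Sobolev embedding; the factors $\Norm{\nu^\om}{q,B}$ and $\Norm{\th}{r,B}$ arise from the local $(S^1_{d'})$ inequality applied inside the iteration, after interpolating the $\th$-weight into the target norm; and the factor $\Norm{1/\th}{q,B}$ appears only in a separate branch of the iteration needed because the discrete balls $B(x_0,\si_k n)$ and $B(x_0,\si_{k+1} n)$ may coincide when $\tau_k n<1$, so one cannot always insert a nontrivial spatial cutoff and must argue directly from $\partial_t v\,\th\geq -\mu^\om v$. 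Bundling them into one inequality hides the place where the $p$-norm must be taken with respect to $\th$ rather than counting measure, and the exponent bookkeeping in \eqref{eq:cond_pqr} would not close.

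Second, and more importantly, the polynomial prefactor $(1+d(x,y)/\sqrt{t})^\ga$ does \emph{not} arise from a Chapman--Kolmogorov splitting plus a first-exit estimate, as your last paragraph suggests; a first-exit estimate produces an exponentially small tail, not a polynomially growing prefactor. In the paper the prefactor is already present in the maximal inequality \eqref{eq:max_ineqV}: the local bound on $Q_{\de,1/2}(n)$ carries a factor $(m^\om(n)/(\ve\de))^\ka$, and to evaluate the kernel at a point $y$ with $d(x_0,y)\gg\sqrt{t}$ one is forced to take $n\gtrsim d(x_0,y)$ while keeping $\de n^2\asymp t$, so $\de\asymp t/d(x_0,y)^2\ll 1$ and the factor $(1/\de)^\ka$ blows up polynomially in $d(x_0,y)/\sqrt{t}$ with $\ga=2\ka-d/2$. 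This is precisely why Proposition~\ref{prop:est_cp} has the two polynomial factors and why the resulting estimate is only genuinely Gaussian when $d$ and $d_\th^\om$ are comparable (cf.\ Remark~\ref{rem:hke}). Your plan has no mechanism to produce this factor: a uniform $\ell^1(\th)\to\ell^\infty$ bound would give the kernel bound with no polynomial correction at all, which is simply false in this generality.
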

\begin{remark} \label{rem:hke}
  (i) In the case of CSRW or VSRW Theorem~\ref{thm:hke} has been established in \cite{ADS16a}. However, the term $(1+ d(x,y)/\sqrt{t})^{\ga}$ is erroneously missing in the result for the VSRW in \cite[Theorem~1.10]{ADS16a}.  

  (ii) If the distance $d_{\th}^{\om}$ and the graph distance $d$ are comparable,  the estimates in Theorem~\ref{thm:hke} turn into Gaussian upper bounds since then the additional term $(1+ d(x,y)/\sqrt{t})^{\ga}$ can  be absorbed by the exponential term into a constant. Both distances are comparable, for instance, for the CSRW, the VSRW under i.i.d.\ conductances (cf.\ \cite[Lemma~4.2]{BD10}) or for random walks on supercritical percolation clusters with long-range correlations (see \cite{DRS14}).   However, if both distances are not comparable, the bounds in Theorem~\ref{thm:hke} become ineffective in the regime where $d_{\th}^{\om}(x,y) < \sqrt{t} < d(x,y)$, since in this case the term $(1 + d(x,y)/\sqrt{t})^{\ga}$ may become large while the exponential term does not provide a decay yet.  Nevertheless, a near-diagonal bound of the following form can be deduced from the parabolic Harnack inequality established in \cite{ADS16}. There exists $c \in (0, \infty)$ such that for any $x \in V$ and any $t > 0$ with $\sqrt{t} \geq N_1(x) \vee N_2(x) \vee N_4(x, \om)$ (with a slightly modified $N_4$) we have for all $y \in V$,
  \begin{align*}
    p^{\om}_{\th}(t, x, y)
    \;\leq\;
    c\, t^{-d/2},
  \end{align*}
  cf.\ \cite[Proposition~4.7]{ADS16}.

  (iii) The on-diagonal decay $t^{-d/2}$  corresponds to $1/\big|B(x,\sqrt{t})\big|$. In general we expect a stronger decay to hold resulting from the volume of a ball with radius $\sqrt{t}$ w.r.t.\ the distance $d_\theta^\om$  under the speed measure $\theta$. For instance, the heat kernel of the random walk discussed in Section~\ref{sec:example} admits the on-diagonal decay $t^{-(d+1)/2}$, see \cite{DF19}.
\end{remark}
If $x$ and $y$ are sufficiently far apart,  the term $(1+ d(x,y)/\sqrt{t})^{\ga}$ can be simplified  by using Theorem~\ref{thm:dist}.
\begin{coro} \label{cor:hke2}
  Suppose that $\om \in \Omega$ satisfies Assumption~\ref{ass:lim_const}, and assume that there exists $C \geq 1$ and $\ve \in [0, (d-1)/(2p-d+1)]$ such that for any $x \in V$ and all $y \in V$ with $d(x,y) \geq N_3(\om, x)$,
  \begin{align*}
    d(x,y) \;\leq\; C\, d_{\th}^{\om}(x,y)^{1+\ve}.
  \end{align*}
  Then, there exist constants $c_i = c_i(d, p, q, C_{\mathrm{int}})$ such that for any given $t$ and $x$ with $\sqrt{t} \geq N_1(x) \vee N_2(x) \vee N_3(x, \om) \vee N_4(x, \om)$ and all $y \in V$ with $d(x,y) \geq N_3(\om,x)$ the following hold.
  \begin{enumerate}
  \item [(i)] If $d_{\th}^{\om}(x, y)\leq c_1 t$ then
    \begin{align*}
      p^{\om}_{\th}(t, x, y)
      \;\leq\;
      c_6\, t^{-d/2}\,
      \big( 1 \vee d_{\th}^{\om}(x,y)\big)^{\!\ve \ga} \, 
      \exp\!\bigg( \!-\! c_7\, \frac{d_{\th}^{\om}(x,y)^2}{t}\bigg).
    \end{align*}
  \item [(ii)] If $d_{\th}^{\om}(x,y) \geq c_5 t$ then
    \begin{align*}
      p^{\om}_{\th}(t, x, y)
      \;\leq\;
      c_6\, t^{-d/2}\,  
      \big( 1 \vee d_\th^{\om}(x,y) \big)^{\!\ve \ga} \,
      \exp\!\bigg( 
        \!-\! c_8\, d_{\th}^{\om}(x,y) 
        \bigg(1 \vee \log \frac{d_{\th}^{\om}(x,y)}{t}\bigg)
      \bigg).
    \end{align*}
  \end{enumerate}
\end{coro}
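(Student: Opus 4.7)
The plan is to apply Theorem~\ref{thm:hke} directly and then use the comparison hypothesis $d(x,y) \leq C\, d_\th^\om(x,y)^{1+\ve}$ to convert the polynomial prefactor $(1 + d(x,y)/\sqrt{t})^\ga$ into the desired $(1 \vee d_\th^\om(x,y))^{\ve\ga}$, absorbing any residual factor $(1 + d_\th^\om(x,y)/\sqrt{t})^\ga$ into the Gaussian (resp.\ sub-Gaussian) exponential at the cost of a slightly smaller constant.

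Under the assumption on $\sqrt t$, Theorem~\ref{thm:hke} delivers in each case a bound of the form $c_2\, t^{-d/2}\,(1+d(x,y)/\sqrt t)^\ga\, E(x,y,t)$, where $E$ denotes the respective exponential term. Since $d(x,y) \geq N_3(\om,x)$, the hypothesis yields
\begin{align*}
  \Big(1 + \frac{d(x,y)}{\sqrt t}\Big)^{\!\ga}
  \;\leq\;
  c\, \big(1 \vee d_\th^\om(x,y)\big)^{\!\ve\ga}\,
  \Big(1 + \frac{d_\th^\om(x,y)}{\sqrt t}\Big)^{\!\ga},
\end{align*}
so the factor $(1 \vee d_\th^\om(x,y))^{\ve\ga}$ appears automatically, and it remains to absorb $(1 + d_\th^\om(x,y)/\sqrt t)^\ga$ into $E$.

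In case~(i) this is immediate from the elementary inequality $(1+u)^\ga \leq C_\de\, \me^{\de u^2}$ (valid for $u \geq 0$ and any $\de > 0$), applied with $u = d_\th^\om(x,y)/\sqrt t$ and $\de < c_3$: the resulting $\exp(\de\, d_\th^\om(x,y)^2/t)$ is absorbed into $\exp(-c_3\, d_\th^\om(x,y)^2/t)$, giving the claim with $c_7 \ldef c_3 - \de$. In case~(ii) one instead enlarges $N_3$, if necessary, so that $d_\th^\om(x,y)$ exceeds an arbitrarily chosen constant (possible because Theorem~\ref{thm:dist} forces $d_\th^\om(x,y) \to \infty$ with $d(x,y)$), and then establishes
\begin{align*}
  \ga\, \log\!\Big(\frac{d_\th^\om(x,y)}{\sqrt t}\Big)
  \;\leq\;
  \de\, d_\th^\om(x,y)\, \big(1 \vee \log(d_\th^\om(x,y)/t)\big)
\end{align*}
for any prescribed $\de > 0$; choosing $\de < c_4$ absorbs the polynomial factor into $\exp(-c_4\, d_\th^\om(x,y)(1 \vee \log(d_\th^\om(x,y)/t)))$ and yields the claim with $c_8 \ldef c_4 - \de$.

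The main technical point lies in case~(ii), where $d_\th^\om(x,y)/\sqrt t$ may substantially exceed $d_\th^\om(x,y)$ for small $t$. The resolution is to split the logarithm as $\log(d_\th^\om(x,y)/\sqrt t) = \tfrac12(\log d_\th^\om(x,y) + \log(d_\th^\om(x,y)/t))$, bound $\log d_\th^\om(x,y) \leq d_\th^\om(x,y)$, and note that the contribution $(\ga/2)\log(d_\th^\om(x,y)/t)$ becomes negligible compared to $\de\, d_\th^\om(x,y)(1 \vee \log(d_\th^\om(x,y)/t))$ once $d_\th^\om(x,y) \geq \ga/(2\de)$, which is precisely what enlarging $N_3$ guarantees.
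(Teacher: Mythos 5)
Your overall strategy is the same as the paper's (which compresses the whole argument into a single line), and your treatment of case~(i) via $(1+u)^{\ga} \le C_{\de}\,\me^{\de u^{2}}$ is correct. But the argument you give for case~(ii) has a gap. After writing
\[
  \ga\,\log\!\big(d_{\th}^{\om}(x,y)/\sqrt{t}\big)
  \;=\;
  \tfrac{\ga}{2}\,\log d_{\th}^{\om}(x,y)
  \,+\,
  \tfrac{\ga}{2}\,\log\!\big(d_{\th}^{\om}(x,y)/t\big),
\]
you bound the first summand by $\tfrac{\ga}{2}\,d_{\th}^{\om}(x,y)$ and then discuss only the second summand. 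The term $\tfrac{\ga}{2}\,d_{\th}^{\om}(x,y)$, however, is \emph{not} negligible relative to $\de\,d_{\th}^{\om}(x,y)\,(1\vee\log(d_{\th}^{\om}(x,y)/t))$: absorbing it would force $\tfrac{\ga}{2}\le \de\,(1\vee\log(d_{\th}^{\om}(x,y)/t))$, and since $d_{\th}^{\om}(x,y)/t$ can stay bounded (anywhere above $c_{5}$), this requires $\de \ge \ga/(2(1\vee\log c_{5}))$. That is a \emph{fixed} lower bound on $\de$ and there is no reason to expect it to be smaller than $c_{4}$, so the construction $c_{8}\ldef c_{4}-\de$ may not produce a positive constant. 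Enlarging $N_{3}$ makes $d_{\th}^{\om}(x,y)$ large, not $d_{\th}^{\om}(x,y)/t$, so it does not help here.

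The fix is simple and consistent with the spirit of what you wrote: the inequality $\log D\le D$ is too lossy. Since $\log D/D\to 0$, for any $\de'>0$ there is a threshold $D_{0}(\ga,\de')$ with $\tfrac{\ga}{2}\log D \le \de' D$ for $D\ge D_{0}$; enlarging $N_{3}$ (so that $d_{\th}^{\om}(x,y)\ge c_{5}t\ge c_{5}N_{3}^{2}\ge D_{0}$) then gives $\tfrac{\ga}{2}\log d_{\th}^{\om}(x,y)\le \de' d_{\th}^{\om}(x,y)\le \de' d_{\th}^{\om}(x,y)(1\vee\log(d_{\th}^{\om}(x,y)/t))$, and combined with your (correct) treatment of the second summand once $d_{\th}^{\om}(x,y)\ge \ga/(2\de')$, this absorbs the polynomial prefactor with $\de=2\de'$ arbitrarily small, as desired. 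With that replacement your proof closes; note also that the constant one picks up from $(1+C\,d_{\th}^{\om}(x,y)/\sqrt{t})^{\ga}\le (1\vee C)^{\ga}(1+d_{\th}^{\om}(x,y)/\sqrt{t})^{\ga}$ depends on $C$, which is consistent with the statement because $C$ is controlled by $m_{p}$ and hence by $C_{\mathrm{int}}$ (see the remark following the corollary).
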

\begin{remark}
  Note that $p > (d-1)/2$ for any $p$ satisfying \eqref{eq:cond_pqr}.  Hence, Theorem~\ref{thm:dist} implies that for any $x \in V$ and all $y \in V$ with $d(x,y) \geq N_3(\om,x)$,
  \begin{align*}
    d(x,y)
    \;\leq\;
    c(m_p)^{-2p/(2p-d+1)}\,
    d_{\th}^{\om}(x,y)^{1+\ve}
  \end{align*}
  with $\ve = (d-1)/(2p-d+1)$. In particular, $\ve \to 0$ as $p\to \infty$.
\end{remark}
\begin{proof}
  This is a direct consequence from Theorem~\ref{thm:hke}. Indeed, since
  \begin{align*}
    \bigg( 1 + \frac{d(x,y)}{\sqrt{t}} \bigg)^{\!\!\ga}
    \;\leq\;
    \bigg( 1 + \frac{C \, d_\theta^\om(x,y)^{1+\ve}}{\sqrt{t}} \bigg)^{\!\!\ga} 
    \;\leq\;
    \big( 1 \vee d_{\th}^{\om}(x,y) \big)^{\ve \ga}\,
    \bigg( 1 + C\, \frac{d_\theta^\om(x,y)}{\sqrt{t}} \bigg)^{\!\!\ga},
  \end{align*}
  the second term can be absorbed by the exponential term into a constant.
\end{proof}
In the remainder of this section we explain how the proof of \cite[Theorem~1.6]{ADS16a} needs to be adjusted in order to prove Theorem~\ref{thm:hke}, that is to obtain Gaussian-like upper bounds on the heat kernel for a larger class of speed measures $\theta$. We also take the opportunity to streamline the arguments in \cite{ADS16a} and to correct some technical mistakes leading to the error mentioned in Remark~\ref{rem:hke}.

\subsection{Maximal inequality for the perturbed Cauchy problem}
Consider the following Cauchy problem
\begin{align}\label{eq:cauchy_prob}
   \left\{
    \begin{array}{rcl}
      \partial_t u - \cL_{\th}^{\om} u
      &\mspace{-5mu}=\mspace{-5mu}& 0,
      \\[1ex]
      u(t=0, \,\cdot\,)
      &\mspace{-5mu}=\mspace{-5mu}& f,
    \end{array}
  \right.
\end{align}
for some function $f\!: V \to \bbR$.  Recall that for any given $y \in \bbZ^d$, the function $(t,x) \mapsto p_{\th}^{\om}(t, x, y)$ solves the heat equation \eqref{eq:cauchy_prob} with $f = \indicator_{\{y\}} / \th(y)$.  For any positive function $\phi$ on $V$ such that $\phi, \phi^{-1} \in \ell^{\infty}(V)$ we define the operator $\cL_{\th, \phi}^{\om}$ acting on bounded functions $g\!: V \to \bbR$ as
\begin{align*}
  (\cL_{\th,\phi}^{\om}\, g)(x)
  \;\ldef\;
  \phi(x) (\cL_{\th}^{\om} \phi^{-1} g)(x).
\end{align*}
As a first step we establish the following a-priori estimate.
\begin{lemma} \label{lem:apriori}
  Suppose that $f \in \ell^2(V, \th^{\om})$ and $u$ solves the corresponding Cauchy problem \eqref{eq:cauchy_prob}.  Further, set $v(t,x) \ldef \phi(x) u(t,x)$ for a positive function $\phi$ on $V$ such that $\phi, \phi^{-1} \in \ell^{\infty}(V)$.  Then
  \begin{align}\label{eq:hke:apriori}
    \norm{v(t, \,\cdot\,)}{2}{V, \th}
    \;\leq\;
    \me^{h_{\th}^{\om}(\phi) t}\, \norm{\phi f}{2}{V, \th},
  \end{align}
  where
  \begin{align*}
    h_{\th}^{\om}(\phi)
    \;\ldef\;
    \max_{x \in V} \frac{1}{2 \th(x)}\, \sum_{y \sim x}\, 
    \big| \md \Ga^{\om}(\phi, \phi^{-1})(\{x,y\}) \big|.
  \end{align*}
\end{lemma}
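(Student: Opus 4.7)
The plan is to run the standard Davies-type energy argument: differentiate $t \mapsto \|v(t,\cdot)\|_{\ell^2(V,\theta)}^2$ in time, reduce the right-hand side to a Dirichlet-form style expression in $v$, identify the potentially bad term as being controlled pointwise by $h_\theta^\omega(\phi)$, and close with Gronwall's inequality.

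First I would observe that since $\phi$ does not depend on $t$ and $u$ solves \eqref{eq:cauchy_prob}, the function $v = \phi u$ satisfies $\partial_t v = \phi\, \cL_\theta^\om(\phi^{-1} v) = \cL_{\theta,\phi}^\om v$, with $v(0,\cdot) = \phi f$. Differentiating under the sum and using reversibility with respect to $\theta$,
\begin{align*}
  \tfrac{1}{2} \tfrac{\md}{\md t}\, \norm{v(t,\cdot)}{2}{V,\theta}^2
  \;=\;
  \scpr{v}{\cL_{\theta,\phi}^\om v}{V,\theta}
  \;=\;
  \scpr{\phi v}{\cL_\theta^\om(\phi^{-1} v)}{V,\theta}
  \;=\;
  -\cE^\om(\phi v,\, \phi^{-1} v),
\end{align*}
by the definition \eqref{eq:def:dform} of the Dirichlet form. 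So the task reduces to a lower bound on $\cE^\om(\phi v, \phi^{-1} v)$ of the form $-h_\theta^\om(\phi)\,\|v\|_{\ell^2(V,\theta)}^2$.

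For this the natural step is to expand $\nabla(\phi v)$ and $\nabla(\phi^{-1} v)$ using the product rule \eqref{eq:rule:prod}. A short edgewise computation gives two useful algebraic identities at every edge $e$: the cancellation
\begin{align*}
  \av{\phi}(e)\, \nabla \phi^{-1}(e) \,+\, \av{\phi^{-1}}(e)\, \nabla \phi(e) \;=\; 0,
\end{align*}
and the nonnegativity bound $\av{\phi}(e)\, \av{\phi^{-1}}(e) \;\geq\; 1$, both of which come from direct algebra on the two endpoints of $e$. Together they yield the edgewise identity
\begin{align*}
  \om(e)\,\nabla(\phi v)(e)\,\nabla(\phi^{-1} v)(e)
  \;=\; \om(e)\, \av{\phi}\av{\phi^{-1}}\,(\nabla v(e))^2 \;+\; \av{v}(e)^2\, \md\Gamma^\om(\phi,\phi^{-1})(e),
\end{align*}
so after summing and discarding the first (nonnegative) term one gets
\begin{align*}
  \cE^\om(\phi v, \phi^{-1} v)
  \;\geq\; \scpr{\av{v}^2}{\md\Gamma^\om(\phi,\phi^{-1})}{E}
  \;\geq\; -\sum_{e \in E}\, \av{v}(e)^2\, \big|\md\Gamma^\om(\phi,\phi^{-1})(e)\big|.
\end{align*}

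From here it is a routine edge-to-vertex rearrangement: bound $\av{v}(e)^2 \leq \tfrac{1}{2}(v(e^+)^2 + v(e^-)^2)$, switch the order of summation, factor out $\theta(x)$ at each vertex, and recognise the definition of $h_\theta^\om(\phi)$ to conclude
\begin{align*}
  -\cE^\om(\phi v, \phi^{-1} v) \;\leq\; h_\theta^\om(\phi)\, \norm{v(t,\cdot)}{2}{V,\theta}^2.
\end{align*}
Inserting this back gives $\tfrac{\md}{\md t}\|v(t,\cdot)\|_{\ell^2(V,\theta)}^2 \leq 2 h_\theta^\om(\phi)\, \|v(t,\cdot)\|_{\ell^2(V,\theta)}^2$, and Gronwall's lemma together with the initial condition $v(0,\cdot) = \phi f$ delivers \eqref{eq:hke:apriori}. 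The only slightly delicate step is the edgewise identity with the vanishing cross-term, which relies on the product rule being used consistently for both $\phi$ and $\phi^{-1}$; once that is in place, the rest is bookkeeping. Since $\phi,\phi^{-1} \in \ell^\infty(V)$ and $f \in \ell^2(V,\theta)$, all the sums above are absolutely convergent, so the differentiation under the sum is justified without further ado.
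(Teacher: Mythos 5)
Your proof is correct and follows essentially the same route that the paper invokes by reference to \cite[Lemma~2.1]{ADS16a}: differentiate $t\mapsto\Norm{v_t}{2,V,\th}^2$, pass to the Dirichlet form $-\cE^\om(\phi v,\phi^{-1}v)$, expand with the product rule so that the cross-term $\av{\phi}\nabla\phi^{-1}+\av{\phi^{-1}}\nabla\phi$ cancels and the remaining good term $\om\,\av{\phi}\av{\phi^{-1}}(\nabla v)^2$ is dropped, then bound $\av{v}^2\le\tfrac12(v(e^+)^2+v(e^-)^2)$, rearrange edges to vertices to extract $h_\th^\om(\phi)\Norm{v_t}{2,V,\th}^2$, and close with Gronwall. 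All the algebra checks out, including the sign $\md\Ga^\om(\phi,\phi^{-1})\le0$ and the identification of $h_\th^\om(\phi)$ after pulling $\th(x)$ out of the vertex sum.
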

\begin{proof}
  This can be shown by the similar arguments as in \cite[Lemma~2.1]{ADS16a}.
\end{proof}
Our next aim is to derive a maximal inequality for the function $v$. For that purpose we will adapt the arguments given in \cite[Section 4]{ADS16} and set up a Moser iteration scheme.  For any finite interval $I \subset \bbR$, finite, connected $B \subset V$ and $p, p' \in (0, \infty)$, let us introduce a space-time-averaged norm on functions $u\!: \bbR \times V \to \bbR$ by
\begin{align*}
  \Norm{u}{p, p', I \times B, \th}
  \;\ldef\;
  \bigg(
    \frac{1}{|I|}\, \int_{I}\; \Norm{u_t}{p, B, \th}^{p'}\; \md t
  \bigg)^{\!\!1/p'}
  \quad \text{and} \quad
  \Norm{u}{p, \infty, I \times B, \th}
  \;\ldef\;
  \sup_{t \in I} \Norm{u_t}{p, B, \th},
\end{align*}
where $u_t = u(t,.)$, $t \in \bbR$.
\begin{lemma} \label{lem:moser_pre}
  Suppose that $Q = I \times B$, where $I = [s_1, s_2] \subset \bbR$ is an interval and $B \subset V$ is finite and connected.  For a given $\phi > 0$ with $\phi, \phi^{-1} \in \ell^{\infty}(V)$, let $v_t \geq 0$ be a solution of $\partial_t v -  \cL_{\th, \phi}^{\om} v \leq 0$ on $Q$.  Further, let $\eta\!: V \to [0, 1]$ and $\ze\!: \bbR \to [0,1]$ be two cutoff functions with
  \begin{align*}
    \supp \eta \;\subset\; B
    &\qquad \text{and} \qquad
    \eta \;\equiv\; 0 \quad \text{on} \quad \partial B,
    \\
    \supp \ze \;\subset\; I
    &\qquad \text{and} \qquad
    \ze(s_1) \;=\; 0.
  \end{align*}
  Then, there exists $C_1 < \infty$ such that for $\al \geq 1$ and $p, p_* \in (1, \infty)$ with $1/p + 1/p_* = 1$, 
  \begin{align}\label{eq:energy:est:Moser}
    &\frac{1}{|I|}\, 
    \Norm{\ze (\eta v^{\al})^2}{1, \infty, Q, \th}
    \,+\, 
    \frac{1}{|I|}\,
    \int_{I} \ze(t)\; \frac{\cE^{\om}(\eta v_t^{\al})}{|B|} \, \md t
    \nonumber\\[1ex]
    &\leq\;
    C_1 \al^2\, 
    \Big(
      \Norm{\mu^\om / \th}{p, B, \th}\, \norm{\nabla \eta}{\infty\!}{\!E}^2\,
      \Norm{v^{2 \al}}{p_*, 1, Q, \th}
      +
      \Big(
        \big\| 
          \ze' 
        \big\|_{\raisebox{-0ex}{$\scriptstyle L^{\raisebox{.1ex}{$\scriptscriptstyle \!\infty$}} (I)$}}
        +
        h_{\th}^{\om}(\phi)
      \Big)
      \Norm{v^{2 \al}}{1, 1, Q, \th}
    \Big).
  \end{align}
\end{lemma}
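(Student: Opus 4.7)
The plan is to test the differential inequality $\partial_t v_t - \cL_{\th,\phi}^{\om} v_t \leq 0$ against the nonnegative function $\eta^2 v_t^{2\al-1}$, summed over $x \in B$ with weight $\th$. Since $v_t^{2\al-1}\, \partial_t v_t = (2\al)^{-1}\partial_t v_t^{2\al}$, the left-hand side gives
\begin{align*}
  \sum_{x \in B} \th(x)\, \eta(x)^2\, v_t(x)^{2\al-1}\, \partial_t v_t(x)
  \;=\;
  \frac{1}{2\al}\, \frac{d}{dt} \sum_{x \in B} \th(x)\, \eta(x)^2\, v_t(x)^{2\al}.
\end{align*}
For the spatial term, write $u_t \ldef \phi^{-1} v_t$; since $\cL_{\th,\phi}^{\om} v_t = \phi\, \cL_{\th}^{\om} u_t$, discrete summation by parts against the reversible measure $\th$ (which eliminates the $1/\th$ in $\cL^\om_\th$) combined with $g\phi = \eta^2\phi^{2\al} u^{2\al-1}$ for $g = \eta^2 v^{2\al-1}$ shows that the spatial contribution equals $-\,\cE^{\om}\!\big( \eta^2 \phi^{2\al} u_t^{2\al - 1},\, u_t \big)$.

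The main obstacle, and the computational core of the proof, is to bound this expression from above by a principal negative part plus tractable error terms, concretely
\begin{align*}
  -\,\cE^{\om}\!\big( \eta^2 \phi^{2\al} u_t^{2\al - 1},\, u_t \big)
  \,\leq\,
  -c\,\cE^{\om}(\eta v_t^{\al})
  + C\al^2 \norm{\nabla \eta}{\infty}{E}^2\! \sum_{x \in B}\! \mu^{\om}(x)\, v_t(x)^{2\al}
  + C\al^2 h_{\th}^{\om}(\phi) \sum_{x \in B}\! \th(x) \eta(x)^2 v_t(x)^{2\al}.
\end{align*}
One expands $\nabla(\eta^2 \phi^{2\al} u^{2\al-1})$ via the product rule \eqref{eq:rule:prod}, regroups the resulting bilinear form into a principal part comparable to $\cE^{\om}(\eta \phi^\al u^\al) = \cE^{\om}(\eta v^\al)$, and controls the remaining cross terms. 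Those involving $\nabla\eta$ produce the $\mu^\om\,(\nabla\eta)^2$-contribution once one uses $\sum_{y\sim x}\om(x,y) = \mu^\om(x)$, while those involving $\nabla\phi$ and $\nabla\phi^{-1}$ are absorbed into $h^\om_\th(\phi)$, whose definition was designed exactly for this. The $\al^2$-factor enters through the elementary chain-rule estimate $|\nabla u^\al|^2 \leq c\al^2\, \av{u^{\al-1}}^2\, |\nabla u|^2$, and Young's inequality is used to reabsorb the favourable parts of the cross terms back into the principal energy. This step is essentially identical to \cite[Proposition~3.3]{ADS16a}, except that $\mu^\om$ is replaced by $\mu^\om/\th$ relative to the $\th$-weighted inner product.

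With this pointwise-in-$t$ energy inequality in hand, multiply by $\ze(t)\geq 0$, integrate over $I$, and integrate by parts in time using $\ze(s_1) = 0$:
\begin{align*}
  \int_I \ze(t)\, \frac{d}{dt}\!\! \sum_{x \in B} \th(x)\, \eta(x)^2\, v_t(x)^{2\al}\, \md t
  \,=\,
  \Big[\ze(t)\!\! \sum_{x \in B}\! \th\, \eta^2\, v_t^{2\al}\Big]_{t=s_1}^{s_2}
  - \int_I \ze'(t)\!\! \sum_{x \in B}\! \th\, \eta^2\, v_t^{2\al}\, \md t.
\end{align*}
Since the inequality also holds with $s_2$ replaced by an arbitrary $t_0 \in I$, taking the supremum in $t_0$ (and dropping the $\cE^\om$-integral up to $t_0$, which is nonnegative) yields the $\|\ze(\eta v^\al)^2\|_{1, \infty, Q, \th}$-term, while the $\ze'$-integral is bounded by $\|\ze'\|_{L^\infty(I)}\, \|v^{2\al}\|_{1,1,Q,\th}$ after dividing by $|B|\cdot|I|$. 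Finally, for each fixed $t$, H\"older's inequality with exponents $p$ and $p_* = p/(p-1)$ applied to the $\|\nabla\eta\|_\infty^2$-term gives
\begin{align*}
  \frac{1}{|B|}\! \sum_{x \in B} \mu^{\om}(x)\, v_t(x)^{2\al}
  \;=\;
  \frac{1}{|B|}\! \sum_{x \in B} \frac{\mu^{\om}(x)}{\th(x)}\, \th(x)\, v_t(x)^{2\al}
  \;\leq\;
  \Norm{\mu^{\om}/\th}{p, B, \th}\, \Norm{v_t^{2\al}}{p_*, B, \th},
\end{align*}
and averaging in $t$ delivers precisely the $\|v^{2\al}\|_{p_*, 1, Q, \th}$-term on the right-hand side of \eqref{eq:energy:est:Moser}.
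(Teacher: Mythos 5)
Your proof follows essentially the same route as the paper: testing against $\eta^2 v_t^{2\al-1}$ in $\ell^2(B,\th)$, expanding the spatial bilinear form via the discrete product rule, absorbing the $\nabla\phi$- and $\nabla\phi^{-1}$-cross terms into $h^{\om}_{\th}(\phi)$, using the chain-rule estimates from \cite[Lemma~B.1]{ADS16a} together with Young's inequality to recover a principal term comparable to $\cE^{\om}(\eta v^{\al})$, then H\"older on the $\mu^{\om}/\th$-term and integration by parts in time via $\ze(s_1)=0$. One minor bookkeeping correction: in your displayed spatial bound the coefficient of $-\cE^{\om}(\eta v^{\al})$ should be of order $1/\al$ (not an absolute constant) and the error terms carry a factor $\al$ (not $\al^2$); the $\al^2$ in \eqref{eq:energy:est:Moser} only appears after multiplying the whole pointwise-in-time inequality by $2\al$ to normalise the coefficient of the time derivative, which is exactly how the paper obtains it, so the final estimate is unaffected.
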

\begin{proof}
  Fix some $\al \geq 1$.  Since $v \geq 0$ satisfies $\partial_t v - \cL_{\th, \phi}^{\om} v \leq 0$ on $Q$, a summation by parts yields
  \begin{align}\label{eq:time:deriv}
    \frac{1}{2 \al}\, \partial_t \norm{\eta v_t^{\al}}{2}{V, \th}^2
    \;\leq\;
    - \scpr{\nabla(\eta^2 \phi v_t^{2\al-1})}{\om \nabla(\phi^{-1} v_t)}{E}
  \end{align}
  for any $t \in I$. By applying the product rule \eqref{eq:rule:prod}, we obtain
  \begin{align}\label{eq:T1+T2:def}
    &\scpr{\nabla(\eta^2 \phi v_t^{2\al-1})}{\om \nabla(\phi^{-1} v_t)}{E}
    \nonumber\\[.5ex]
    &\mspace{36mu}\;=\;
    \scpr{\av{\eta^2}}{\md \Ga^{\om}(\phi v_t^{2\al-1}\!, \phi^{-1} v_t)}{E}
    \,+\,
    \scpr{\av{\phi v_t^{2\al-1}}}{\md \Ga^{\om}(\eta^2\!, \phi^{-1} v_t)}{E}
    \nonumber\\[.5ex]
    &\mspace{36mu}\;\rdef\;
    T_1 \,+\, T_2. 
  \end{align}
  Let us first focus on the term $T_1$.  Again, an application of the product rule \eqref{eq:rule:prod} together with the fact that $(\nabla \phi)(\nabla \phi^{-1}) \leq 0$ and $ -\av{\phi^{-1}} (\nabla \phi) = \av{\phi} (\nabla \phi^{-1})$, yields the following lower bound
  \begin{align*}
    \md \Ga^{\om}(\phi v_t^{2\al-1}\!, \phi^{-1} v_t)
    &\;\geq\;
    \av{\phi} \av{\phi^{-1}}\, 
    \md \Ga^{\om}(v_t^{2\al-1}\!, v_t)
    \,+\,
    \av{v_t^{2\al}}\, \md \Ga^{\om}(\phi, \phi^{-1})
    \\
    &\mspace{29mu}+\,
    \av{\phi}
    \Big(
      \av{v_t}\, \md \Ga^{\om}(v_t^{2\al-1}, \phi^{-1}) 
      - 
      \av{v_t^{2\al-1}}\, \md \Ga^{\om}(v_t, \phi^{-1})
    \Big),
  \end{align*}
  where we used that by H\"older's inequality, $\av{v_t^{\al_1}} \av{v_t^{\al_2}} \leq \av{v_t^{\al_1+\al_2}}$ for any $\al_1, \al_2 \geq 0$.  Further, by \cite[Lemma~B.1]{ADS16a}, we have that
  \begin{align*} 
    \md \Ga^{\om}(v_t^{2\al-1}, v_t)
    &\;\geq \;
    \frac{2\al-1}{\al^{2}}\, \md\Ga^{\om}(v_t^{\al}\!, v_t^{\al}), 
  \end{align*}
  and
  \begin{align}\label{eq:est:v}
    &\big|
      \av{v_t}(e) \nabla v_t^{2\al-1}(e) - \av{v_t^{2\al-1}}(e) \nabla v_t(e)
    \big|
    \nonumber\\
    &\mspace{36mu}=\;
    \big|v_t^{2\al -1}(e^+) v_t(e^-) - v_t^{2\al-1}(e^-) v_t(e^+) \big|
    \;\leq\;
    \frac{2(\al-1)}{\al}\, \big| \av{v_t^{\al}}(e)\, \nabla v_t^{\al}(e) \big|
  \end{align}
  for all $e \in E$. Thus, by combining the estimates above and using that
  \begin{align}\label{eq:av+phi:sqrt}
    \av{\phi} \big|\nabla \phi^{-1}\big| 
    \;=\; 
    \sqrt{ \av{\phi} \av{\phi^{-1}} } 
    \cdot \sqrt{-(\nabla \phi)(\nabla \phi^{-1}) },
  \end{align}
  an application of Young's inequality, that reads $|a b| \leq \frac{1}{2}(\ve a^2 + b^2/\ve)$, with $\ve = 1/(2\al)$ results in
  \begin{align*}
    T_1
    &\;\geq\;
    \frac{3\al-1}{2\al^2} 
    \scpr{\av{\eta^2} \av{\phi} \av{\phi^{-1}}}
    {\md \Ga^{\om}(v_t^{\al}\!, v_t^{\al})}{E}
    \,-\,
    2\al\, |B|\, h_{\th}^{\om}(\phi)\, \Norm{v_t^{2\al}}{1, B, \th}.
  \end{align*}
  Let us now address the term $T_2$.  Observe that
  \begin{align*}
    &\av{\phi v_t^{2\al-1}}\,
    \md \Ga^{\om}(\phi^{-1} v_t, \eta^2)
    \\
    &\mspace{36mu}=\;
    2 \av{\phi v_t^{2\al-1}} \av{\eta}
    \Big( 
      \av{\phi^{-1}} \, \md \Ga^{\om}(v_t, \eta)
      \,+\,
      \av{v_t}\, \md \Ga^{\om}(\phi^{-1}, \eta)
    \Big)
    \\
    &\mspace{36mu}\geq\;
    -4 \av{\eta} \av{\phi} \av{v_t^{2\al-1}}\,
    \Big( 
      \av{\phi^{-1}} \, \big|\md \Ga^{\om}(v_t, \eta)\big|
      \,+\,
      \av{v_t}\, \big|\md \Ga^{\om}(\phi^{-1}, \eta)\big|
    \Big).
  \end{align*}
  Since $\av{v_t^{2\al-1}}\av{v_t} \leq \av{v_t^{2\al}}$, an application of the Young inequality yields
  \begin{align*}
    &4 \av{\eta} \av{\phi} \av{v_t^{2\al}}\,
    \big|\md \Ga^{\om}(\phi^{-1}, \eta)\big|
    \\[.5ex]
    &\mspace{36mu}\overset{\!\!\!\eqref{eq:av+phi:sqrt}\!\!\!}{\leq\;}
    8\,\av{\phi} \av{\phi^{-1}} \av{v_t^{2\al}}\,
    \md \Ga^{\om}(\eta, \eta)
    \,-\,
    \frac{1}{2} \av{\eta^2} \av{v_t^{2\al}}\, 
    \md \Ga^{\om}(\phi, \phi^{-1}).
  \end{align*}
  On the other hand,
  \begin{align*}
    &\big| \av{v_t^{2\al-1}}(e) (\nabla v_t)(e) \big|
    \\
    &\mspace{36mu}\leq\;
    \Big|
      \av{v_t^{\al}}(e) (\nabla v_t^{\al})(e)
     \Big| \,+\,
      \frac{1}{2}\, \Big|
      \big(v_t^{2\al -1}(e^+) v_t(e^-) - v_t^{2\al-1}(e^-) v_t(e^+) \big)
    \Big|
    \\
    &\mspace{36mu}\overset{\!\!\!\eqref{eq:est:v}\!\!\!}{\leq\;}
    \frac{2\al-1}{\al}\,
    \big| \av{v_t^{\al}}(e)\, \nabla v_t^{\al}(e) \big|.
  \end{align*}
  Thus, by applying again Young's inequality with $\ve = 1/(4\al)$, we get
  \begin{align*}
    &4 \av{\eta} \av{\phi} \av{\phi^{-1}} \av{v_t^{2\al-1}}\, 
    \big|\md \Ga^{\om}(v_t, \eta)\big|
    \\[1ex]
    &\mspace{36mu}\leq\;
    4\, \frac{2\al-1}{\al}\, 
    \av{\phi} \av{\phi^{-1}} \av{\eta} \av{v_t^{\al}}\,
    \big| \md \Ga^{\om}(v_t^{\al}, \eta) \big|
    \\
    &\mspace{36mu}\leq\;
    \av{\phi} \av{\phi^{-1}}
    \bigg(
      \frac{2\al-1}{2\al^2}\, \av{\eta^2}\, 
      \md \Ga^{\om}(v_t^{\al}\!, v_t^{\al})
      \,+\,
      8(2\al-1)\, \av{v_t^{2\al}}\,
      \md \Ga^{\om}(\eta, \eta)
    \bigg)
  \end{align*}
  Hence, the estimates above together with the fact that
  \begin{align*}
    \av{\phi^{-1}} \av{\phi}
    \;=\;
    1 -
    \dfrac{1}{4}\, (\nabla \phi) (\nabla \phi^{-1})
  \end{align*}
  give rise to the following lower bound
  \begin{align*}
    T_2
    &\;\geq\;
    -\frac{2\al-1}{2\al^2} 
    \scpr{\av{\eta^2} \av{\phi} \av{\phi^{-1}}}
    {\md \Ga^{\om}(v_t^{\al}\!, v_t^{\al})}{E}
    \nonumber\\[.5ex]
    &\mspace{30mu}-\,
    16 \al\, |B|\, \Norm{\mu^{\om} / \th}{p, B, \th}\, 
    \norm{\nabla \eta}{\infty}{E}^2\,
    \Norm{v_t^{2\al}}{p_*, B, \th}
    \,-\,
    5 \al\, h_{\th}^{\om}(\phi)\, |B|\, \Norm{v_t^{2\al}}{1, B, \th}.
  \end{align*}
  Since $\av{\phi} \av{\phi^{-1}} \geq 1$ and 
  \begin{align*}
    \av{\eta^2}\, \md \Ga^{\om}(v_t^{\al}\!, v_t^{\al})
    \;\geq\; \frac 1 2 \,
    \md \Ga^{\om}(\eta v_t^{\al}\!, \eta v_t^{\al})
    \,-\,
    \av{v_t^{2\al}}\, \md \Ga^{\om}(\eta, \eta),
  \end{align*}
  we obtain that there exists $C_1 < \infty$ such that
  \begin{align*}
    T_1 + T_2
    &\;\geq\;
    \frac{1}{4\al}\, \cE^{\om}(\eta v_t^{\al})
    \\
    &\mspace{31mu}-\,
    \frac{C_1}{4}\, \al\, |B|\,  
    \Big(
      \Norm{\mu^{\om}/\th}{p, B, \th}\, \norm{\nabla \eta}{\infty}{E}^2\,
      \Norm{v_t^{2\al}}{p_*, B, \th}
      \,+\,
      h_{\th}^{\om}(\phi)\, \Norm{v_t^{2\al}}{1, B, \th}
    \Big).
  \end{align*}
  Hence, 
  \begin{align}\label{eq:energy:est:deriv}
    &2\, \partial_t \Norm{(\eta v_t^{\al})^2}{1, B}
    \,+\,
    \frac{\cE_t^{\om}(\eta v_t^{\al})}{|B|}
    \nonumber\\[.5ex]
    &\mspace{36mu}\leq\;
    C_1 \al^2\, 
    \Big(
      \Norm{\mu^{\om}/\th}{p, B, \th}\, \norm{\nabla \eta}{\infty}{E}^2\,
      \Norm{v_t^{2\al}}{p_*, B, \th}
      \,+\,
      h^{\om}(\phi)\, \Norm{v_t^{2\al}}{1, B, \th} 
    \Big).
  \end{align}
  Finally, since $\ze(s_1) = 0$,
  \begin{align*}
    \int_{s_1}^{s}\!\!
      \ze(t)\, \partial_t \Norm{(\eta v_t^{\al})^2}{1, B}\;
    \md t
    &\;=\;
    \int_{s_1}^{s}
      \Big(
        \partial_t\big( \ze(t) \Norm{(\eta v_t^{\al})^2}{1, B} \big)
        -
        \ze'(t) \Norm{(\eta v_t^{\al})^2}{1,B}
      \Big)\;
    \md t
    \\[.5ex]
    &\;\geq\;
    \frac{\ze(s)}{2} \Norm{(\eta v_s^{\al})^2}{1, B}
    \,-\,
    \|\ze'\|_{\raisebox{-.0ex}{$\scriptstyle L^{\raisebox{.2ex}{$\scriptscriptstyle {\!\infty\!}$}} (I)$}}\, 
    |I|\, \Norm{v^{2\al}}{p_*, 1, Q}
  \end{align*}
  for any $s \in (s_1, s_2]$.  Thus, by multiplying both sides of \eqref{eq:energy:est:deriv} with $\ze(t)$ and integrating the resulting inequality over $[s_1, s]$ for any $s \in I$, the assertion \eqref{eq:energy:est:Moser} follows by an application of the H\"older inequality.
\end{proof}
For any $x_0 \in V$, $\de \in (0,1)$ and $n \geq 1$, we write $Q_{\de}(n) \equiv [0,  \de n^2] \times B(x_0, n)$ to denote the corresponding space-time cylinder, and we set
\begin{align*}
  Q_{\de, \si}(n)
  \;\ldef\;
  \big[( 1 - \si) s', (1-\si) s'' + \si \de n^2 \big] \times B(x_0, \si n),
  \qquad \si \in (0, 1],
\end{align*}
where $s' = \ve \de n^2$ and $s'' = (1-\ve)\de n^2$ for some fixed $\ve \in (0, 1/4)$.
\begin{prop}
  For $x_0 \in V$, $\de \in (0, 1]$,  $\ve \in (0, 1/4)$ and $n \geq N_1(x_0) \vee N_2(x_0)$, let $v > 0$ be such that $\partial_t v - \cL_{\th, \phi}^{\om} v = 0$ on $Q(x_0, n)$.  Then, for any $p, q, r \in (1, \infty]$ satisfying \eqref{eq:cond_pqr} there exists $C_2 \equiv C_2(d, p, q, r, \varepsilon) < \infty$ and $\ka = \ka(d', p, q, r) < \infty$ such that
  \begin{align} \label{eq:max_ineqV}
    \max_{(t,x) \in  Q_{\de, 1/2}(n)} v(t,x)
    \;\leq\;
    \frac{C_2}{n^{d/2}}\,  \bigg(\frac{m^{\om}(n)}{\varepsilon\de}\bigg)^{\!\ka}
    \me^{2 (1 - \ve) h(\phi) \de n^2}\,
    \norm{\phi f}{2}{V, \th},
  \end{align}
  where
  \begin{align*}
    m^{\om}(n) 
    \;\ldef\; 
    \Norm{1 \vee \frac{\mu^{\om}}{\th}}{p, B(x_0, n), \th} \cdot 
    \Norm{1 \vee \nu^{\om}}{q, B(x_0, n)} \cdot
    \Norm{1 \vee \th}{r, B(x_0, n)} \cdot 
    \Norm{1 \vee \frac{1}{\th}}{q, B(x_0,n)}.
  \end{align*}
\end{prop}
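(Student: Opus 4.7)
The plan is to run a standard Moser iteration built on top of the energy estimate in Lemma~\ref{lem:moser_pre}, with the $L^2$ initial bound supplied by Lemma~\ref{lem:apriori}. The proof will proceed through: (i) converting the energy inequality into a spatial reverse Hölder estimate using the local Sobolev inequality, (ii) combining with the time-supremum bound to get a space--time reverse Hölder estimate on nested cylinders, (iii) iterating to produce an $L^\infty$ bound, and (iv) closing the loop with Lemma~\ref{lem:apriori}.

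In the first step, I apply the local Sobolev inequality \eqref{eq:ass:riso} to $u=\eta v_t^\alpha$, which controls $\bigl(\sum_{B(x_0,n)}|\eta v_t^\alpha|^{d'/(d'-1)}\bigr)^{(d'-1)/d'}$ by $n^{1-d/d'}\sum_{\{y,z\}\in E}|\nabla(\eta v_t^\alpha)|$. The pointwise factorization $|\nabla u| = \omega^{1/2}|\nabla u|\cdot\omega^{-1/2}$ together with Cauchy--Schwarz and a Hölder step in the exponent $q$ bounds this nearest-neighbour sum by $\cE^\om(\eta v_t^\alpha)^{1/2}\cdot |B|^{1/2}\Norm{\nu^\om}{q,B}^{1/2}$. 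A further Hölder step with exponents $r$ and $q$ converts the unweighted $\ell^{d'/(d'-1)}$-norm on the left into a $\theta$-weighted norm $\Norm{(\eta v_t^\alpha)^2}{\rho,B,\theta}$ at the cost of the factors $\Norm{1\vee\th}{r,B}$ and $\Norm{1\vee 1/\th}{q,B}$; condition \eqref{eq:cond_pqr} is exactly what guarantees that the resulting exponent $\rho>1$.

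In the second and third steps I interpolate the spatial estimate against the $L^\infty_t L^1_x$-term on the left of \eqref{eq:energy:est:Moser} to produce a space--time reverse Hölder inequality of the schematic form
\[
\Norm{v^{2\alpha}}{\rho',\rho',Q',\theta}
\;\le\; C\,\alpha^2\, m^\om(n)\Big(\tfrac{1}{((\sigma-\sigma')n)^2}+\tfrac{1}{(\tau-\tau')\varepsilon\delta n^2}+h_\theta^\om(\phi)\Big)\Norm{v^{2\alpha}}{p_*,1,Q,\theta},
\]
with $\rho'>1$, on nested cylinders $Q'\subset Q$ whose cutoffs $\eta,\zeta$ satisfy $\|\nabla\eta\|_\infty\lesssim 1/((\sigma-\sigma')n)$ and $\|\zeta'\|_\infty\lesssim 1/((\tau-\tau')\varepsilon\delta n^2)$. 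Choosing radii $\sigma_k = 1/2 + 2^{-k-1}$ and exponents $\alpha_k = (\rho'/p_*)^k$, each step upgrades the $L^{2\alpha_k}$-norm on $Q_k$ to the $L^{2\alpha_{k+1}}$-norm on $Q_{k+1}$; raising to the power $1/(2\alpha_k)$, the product $\prod_k C^{1/(2\alpha_k)}$ converges because $\sum k/\alpha_k<\infty$. This yields
\[
\max_{Q_{\delta,1/2}(n)} v \;\le\; C\,(m^\om(n)/(\varepsilon\delta))^\kappa\, \me^{h_\theta^\om(\phi)(1-\varepsilon)\delta n^2}\,\Norm{v^{2}}{p_*,1,Q_\delta(n),\theta}^{1/2},
\]
for an explicit $\kappa=\kappa(d',p,q,r)$, where the exponential factor comes from the $h_\theta^\om(\phi)$-term propagating through the iteration. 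Finally, an elementary Hölder step in time reduces the $L^{p_*,1}$ norm on the right to $\sup_{t\le\delta n^2}\Norm{v_t}{2,V,\theta}^2$ (up to a factor of $\delta n^2$ absorbed into $\kappa$), after which Lemma~\ref{lem:apriori} gives $\Norm{v_t}{2,V,\theta}\le \me^{h_\theta^\om(\phi)t}\Norm{\phi f}{2,V,\theta}$; evaluating at $t = (1-\varepsilon)\delta n^2$ produces the factor $\me^{2(1-\varepsilon)h_\theta^\om(\phi)\delta n^2}$, and the normalization $|B(x_0,n)|\asymp n^d$ from \eqref{eq:ass:vd} supplies the $n^{-d/2}$ prefactor.

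The main technical obstacle is bookkeeping: one has to push the four distinct weight norms ($\mu^\om/\th$, $\nu^\om$, $\th$, $1/\th$) through the Hölder chain in the first step without accruing hidden factors of $n$, and then verify that their exponents combine so that \eqref{eq:cond_pqr} is exactly the threshold for $\rho'>1$. Compared with the argument in \cite{ADS16a} for CSRW/VSRW, the new ingredient here is the presence of the speed measure $\theta$ both inside the norms and as a weight on the left-hand side of the Sobolev step, which is what introduces the two new Hölder exponents in $r$ and $q$ and what fixes the correct form of $m^\om(n)$.
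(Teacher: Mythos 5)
Your proposal follows the same high-level strategy as the paper (Moser iteration driven by Lemma~\ref{lem:moser_pre}, spatial Sobolev from Assumption~\ref{ass:graph}(iii), interpolation between $L^\infty_t L^1_x$ and the spatial reverse-H\"older exponent, closure via Lemma~\ref{lem:apriori}), and the role you assign to the four weight norms in $m^\om(n)$ is correct. However, there is one genuine gap and two substantive imprecisions.

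The gap: you do not address the discreteness of the underlying graph. With $\si_k = 1/2 + 2^{-k-1}$, the difference $(\si_k - \si_{k+1})n = 2^{-k-2}n$ drops below $1$ once $k \gtrsim \log_2 n$, at which point $B(x_0,\si_{k+1}n) = B(x_0,\si_k n)$ and no spatial cut-off $\eta_k$ with the required gradient bound (or indeed with support strictly inside $B_k$ and $\equiv 1$ on $B_{k+1}$) exists. Since the iteration exponents $\al_k$ still need to tend to infinity, infinitely many steps fall in this regime and the Sobolev-based reverse-H\"older step simply breaks down. The paper's proof handles this with an explicit case distinction $\tau_k n \geq 1$ versus $\tau_k n < 1$: in the second case it abandons the Sobolev route and instead uses the pointwise bound $\partial_t u_t(x) \geq -(\mu^\om(x)/\th(x))u_t(x)$, a comparison of $\max_{B_k} v_t^{2\al_k}$ with $|B_k|$ times an average (exploiting $n \lesssim 1/\tau_k$), and an integration in time with a new cut-off $\xi_k$ to obtain the replacement iterative estimate \eqref{eq:Moser:iter:step3}. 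Without this, your iteration is incomplete.

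The imprecisions: (a) the reduction from $\Norm{v}{2p_*, 2, Q_{\de,\si}(n), \th}$ to $\Norm{v}{2,\infty,Q_\de(n),\th}$ is not an ``elementary H\"older step'' — for $p_* > 1$ the exponent $2p_*$ is strictly larger than $2$, so this requires a second iteration over the cylinder parameter $\si$ in the style of \cite[Theorem~2.2.3]{S-C02} or \cite[Corollary~3.9]{ADS15}, which the paper invokes explicitly; and a factor of $\de n^2$ cannot be ``absorbed into $\ka$'' since $\ka$ is $n$-independent. (b) The prefactor $\me^{2(1-\ve)h_\th^\om(\phi)\de n^2}$ does not come from ``evaluating Lemma~\ref{lem:apriori} at $t=(1-\ve)\de n^2$'' (that would give a single power of $(1-\ve)$). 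Rather, Lemma~\ref{lem:apriori} contributes $\me^{h_\th^\om(\phi)\de n^2}$, and the polynomial factor $(1+\de n^2 h_\th^\om(\phi))^\ka$ accumulated through the iteration is then absorbed into a further $\me^{(1-2\ve)h_\th^\om(\phi)\de n^2}$ times a constant $c(\ve)$, yielding $\me^{(2-2\ve)h_\th^\om(\phi)\de n^2}$ in total. This bookkeeping is precisely where the $\ve$-dependence of $C_2$ enters, so it matters.
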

\begin{proof}
  We will follow similar arguments as in the proof of \cite[Proposition 4.2]{ADS16}.  Fix some $1/2 \leq \si' < \si \leq 1$.  For $p, r \in (1, \infty)$, let $p_* \ldef p/(p-1)$ and $r_* \ldef r / (r-1)$ be the H\"older conjugate of $p$ and $r$, respectively.  For any $k \in \bbN_0$ set $\al_k \ldef \al^k$, where
  \begin{align*}
    \al
    \;\ldef\;
    1 + \frac{1}{p_*} - \frac{r_*}{\rho}
    \qquad \text{and} \qquad
    \rho
    \;\ldef\;
    \frac{d'}{d'-2+d'/q}.
  \end{align*}
  Notice that for any $p, q, r \in (1, \infty)$ satisfying \eqref{eq:cond_pqr} we have $\al > 1$.  In particular,  $r_* / \rho + 1/p < 1$.  Further, for 
  \begin{align*}
    \si_k \;=\; \si' + 2^{-k} (\si - \si')
    \qquad \text{and} \qquad
    \tau_k \;=\; 2^{-k-1} (\si - \si'),
    \quad k \in \bbN_0,
  \end{align*}
  we write $I_k \ldef [(1-\si_k)s', (1-\si_k)s''+\si_k \de n^2]$, $B_k \ldef B(x_0, \si_k n)$ and $Q_k \ldef Q_{\de, \si_k}(n)$ to lighten notation.  Note that $|I_k| / |I_{k+1}| \leq 2$ and $|B_k| / |B_{k+1}| \leq 2^d C_{\mathrm{reg}}^2$.  The constant $c \in (0, \infty)$ appearing in the computations below is independent of $n$ but may change from line to line. 
First, by using H\"older's and Young's inequality, 
  \begin{align} \label{eq:norm_interpol}
    \Norm{v^{2\al_k}}{\al p_*, \al, Q_{k+1}, \th}
    &\;\leq\;
    \Norm{v^{2\al_k}}{1, \infty, Q_{k+1}, \th} 
    \,+\,
    \Norm{v^{2\al_k}}{\rho / r_*, 1, Q_{k+1}, \th}
  \end{align}
  (cf.\ \cite[Lemma 1.1]{KK77}).   Due to the discrete structure of the underlying space $\bbZ^d$, the discrete balls $B_{k+1}$ and $B_k$ may coincide whenever $\tau_k n$ is sufficiently small.  For this reason, we proceed by distinguishing two different cases.  
  
  First consider the case $\tau_k n \geq 1$. For any $k \in \bbN_0$ let $\eta_k$ be a cut-off functions in space and $\ze_k \in C^{\infty}(\bbR)$ be a cut-off function in time such that $\supp \eta_k \subset B_{k}$, $\eta_k \equiv 1$ on $B_{k+1}$, $\eta_k \equiv 0$ on $\partial B_{k}$, $\norm{\nabla \eta_k}{\infty}{E} \leq 1/(\tau_{k} n)$ and $\supp \ze_k \subset I_{k}$, $\ze_k \equiv 1$ on $I_{k+1}$, $\ze_k( (1-\si_k)s') = 0$ and $\big\| \ze_k' \big\|_{\raisebox{-0ex}{$\scriptstyle L^{\raisebox{.1ex}{$\scriptscriptstyle \!\infty$}} ([0,\de n^2])$}} \leq  1 / (\tau_k \de n^2)$. Then,  from \eqref{eq:norm_interpol} we get
  \begin{align*}
    \Norm{v^{2\al_k}}{\al p_*, \al, Q_{k+1}, \th}
    &\;\leq\;
    c\, 
    \Big(
      \Norm{\ze_k(\eta_k v^{\al_k})^2}{1, \infty, Q_k, \th}
      \,+\,
      \Norm{\ze_k(\eta_k v^{\al_k})^2}{\rho / r_*, 1, Q_k, \th}
    \Big).
  \end{align*}
  Further, by Assumption~\ref{ass:graph}(iii) we may apply the Sobolev inequality for functions with compact support in \cite[Equation~(28)]{ADS15} to obtain
  \begin{align*}
    \Norm{\ze_k(\eta_k v^{\al_k})^2}{\rho / r_*, 1, Q_k, \th}
    &\;\leq\;
    c\, n^2\, \Norm{\nu^{\om}}{q, B_k}\, 
    \Norm{\th}{r, B_k}^{r_*/\rho}\;
    \frac{1}{|I_k|}\,
    \int_{I_k} 
      \ze_k(t) \frac{\cE^{\om}(\eta_k v_t^{\al_k})}{|B_k|}
    \md t.
  \end{align*}
  Hence,
  \begin{align}
    &\Norm{\ze_k(\eta_k v^{\al_k})^2}{1, \infty, Q_k, \th}
    \,+\,
    \Norm{\ze_k(\eta_k v^{\al_k})^2}{\rho / r_*, 1, Q_k, \th}
    \nonumber\\[1ex]
    &\mspace{36mu}\leq\;
    c\, n^2\, 
    \bigg(
      \frac{1}{|I_k|}\Norm{\ze_k(\eta_k v^{\al_k})^2}{1, \infty, Q_k, \th}
      \,+\,
      \frac{\Norm{\nu^{\om}}{q, B_k}\,\Norm{\th}{r, B_k}^{r_*/\rho}}{|I_k|}
      \int_{I_k} \ze_k(t) \frac{\cE^{\om}(\eta_k v^{\al_k})}{|B_k|}\, \md t
    \bigg)
    \nonumber\\[1ex]
    &\mspace{36mu}\overset{\!\!\!\!\eqref{eq:energy:est:Moser}\!\!\!\!}{\leq\;}
    c\, \al_k^2\, 
    \frac{m^{\om}(n)}{\Norm{1 \vee 1/\th}{1, B_k}}\,
    \bigg(
      \frac{1}{\de \tau_k^2}
      \,+\,
      n^2 h_{\th}^{\om}(\phi)
    \bigg)\,
    \Norm{v^{2\al_k}}{p_*, 1, Q_k, \th}.
    \label{eq:Moser:iter:step}
  \end{align}
  Thus, by combining the estimates above, we get
  \begin{align} \label{eq:Moser:iter:step2}
    &\Norm{v}{2\al_{k+1}p_*, 2\al_{k+1}, Q_{k+1}, \th} 
    \;=\;
    \Norm{v^{2\al_k}}{\al p_*, \al, Q_{k+1}, \th}^{1/(2\al_k)}
    \nonumber
    \\[1ex]
    &\mspace{36mu}\leq\;
    \bigg(
      c\, 2^{2k} \al_k^2\, 
      \frac{\big(1 + \de n^2 h_{\th}^{\om}(\phi)\big)}{\de( \si - \si' )^2}\,
      \frac{m^{\om}(n)}{\Norm{1 \vee 1/\th}{1, B_k}}
    \bigg)^{\!\!1/(2\al_k)}
    \Norm{v}{2\al_kp_*, 2\al_k, Q_k, \th}.
  \end{align}

  Next we consider the case $\tau_k n < 1$. Again we shall estimate both terms on the right hand side of \eqref{eq:norm_interpol}.  Note that for any $t \in I_k$,
  \begin{align*}
    \Norm{v_t^{2 \al_k}}{\rho/r_*, B_{k+1}, \th}
    &\;\leq\;
    \Big( \max_{x \in B_k} v_t(x)^{2 \al_k} \Big)^{\!1-r_*p_*/\rho}\,
    \Norm{v_t^{2 \al_k}}{p_*, B_k, \th}^{r_* p_*/\rho}
    \\[.5ex]
    &\;\leq\;
    \Big(
      |B_k|^{(1+1/q)/p_*}\, \Norm{v_t^{2 \al_k}}{p_*/(1+1/q), B_k}
    \Big)^{\!1-r_*p_*/\rho}\,
    \Norm{v_t^{2 \al_k}}{p_*, B_k,\th}^{r_* p_*/\rho}
    \\[.5ex]
    &\;\leq\;
    |B_k|^{(1+1/q) (1/p_*-r_*/\rho)}\, \Norm{1/\th}{q,B_k}^{1/p_*-r_*/\rho}\,
    \Norm{v_t^{2 \al_k}}{p_*, B_k, \th},   
  \end{align*}
  where we have used that by H\"older's inequality
  \begin{align*}
    \Norm{v_t^{2 \al_k}}{p_*/(1+1/q), B_k}
    \;\leq\;
    \Norm{v_t^{2 \al_k} \th^{1/p_*}}{p_*, B_k}\,
    \Norm{\th^{-1/p_*}}{p_*q,B_k}
    \;\leq \;
    \Norm{v_t^{2 \al_k}}{p_*, B_k, \th} \, \Norm{1/\th}{q,B_k}^{1/p_*}.
  \end{align*}      
  Since $d (1 + 1/q) (1/p_* - r_*/\rho) \leq 2$ and $n < 1 / \tau_k$, we find
  \begin{align}\label{eq:aprio_term1}
    \Norm{v^{2\al_k}}{\rho/r_*, 1, Q_{k+1}, \th}
    \;\leq\;
    c\, \frac{2^{2k}}{(\si - \si')^{2}}\, \Norm{1/\th}{q,B_k}^{1/p_*-r_*/\rho}\,
    \Norm{v^{2 \al_k}} {p_*, 1, Q_k, \th}.
  \end{align}
  In order to estimate the first term on the right hand side of \eqref{eq:norm_interpol}, recall that $v_t(x) = \phi(x) u_t(x)$, where
  \begin{align*}
    \partial_t u_t(x)
    \;=\;
    (\cL^{\om}_{\th} u_t)(x)
    \;\geq\;
    - \frac{\mu^\om(x)}{\th(x)}\, u_t(x).
  \end{align*}
  Hence, $ \partial_t v_t(x) \th(x) \geq - \mu^{\om}(x) v_t(x)$ and therefore
  \begin{align} \label{eq:partialNormv_t}
    \frac{1}{2\al_k} \, \partial_t \Norm{v_t^{2 \al_k}}{1, B_k, \th}
    \;\geq\;
    -\Norm{v_t^{2 \al_k}}{1, B_k, \mu^\om}.
  \end{align}
  Let now $\xi_k \in C^{\infty}(\bbR)$ be a cut-off function in time such that $\supp \xi_k \subset I_{k}$, $\xi_k \equiv 1$ on $I_{k+1}$, $\xi_k( t_k ) = 0$ and $\big\| \xi_k' \big\|_{\raisebox{-0ex}{$\scriptstyle L^{\raisebox{.1ex}{$\scriptscriptstyle \!\infty$}} ([0,\de n^2])$}} \leq  1 / (\varepsilon \tau_k \de n^2)$, where we write in short $t_k\ldef (1-\si_k)s''+\si_k \de n^2$ for the right endpoint of $I_k$.  We also choose $t_* \in I_{k+1}$ such that
  \begin{align*}
    \Norm{v_{t_*}^{2 \al_k}}{1, B_k, \th}
    \;=\; 
    \max_{t \in I_{k+1}} \Norm{v_t^{2 \al_k}}{1, B_k, \th}. 
  \end{align*}
  Then, from \eqref{eq:partialNormv_t} and product rule we get
  \begin{align*}
    \partial_t \Big( \xi_k(t) \, \Norm{v_t^{2 \al_k}}{1,B_k,\th} \Big)
    \;\geq\;
    \xi_k'(t)\, \Norm{v_t^{2 \al_k}}{1, B_k, \th}
    \,-\,  2\al_k\, \xi_k(t)\, \Norm{v_t^{2 \al_k}}{1, B_k, \mu^\om},
  \end{align*}
  and an integration over $t$ yields
  \begin{align*}
    \max_{t\in I_{k+1}} \Norm{v_t^{2 \al_k}}{1, B_k, \th}
    \;\leq\;
    \int_{t_*}^{t_k}
      \Big(
        2\al_k\, \xi_k(t)\, \Norm{v_t^{2 \al_k}}{1, B_k, \mu^\om}
        \,-\, \xi_k'(t)\, \Norm{v_t^{2 \al_k}}{1, B_k, \th}
      \Big)\,
    \md t,
  \end{align*}
  so that
  \begin{align}\label{eq:aprio_term2}
    \Norm{v^{2\al_k}}{1, \infty, Q_{k+1}, \th}
    &\;\leq\;
    2 \al_k\, |I_k|\, \Norm{v^{2\al_k}}{1, 1, Q_{k}, \mu^\om}
    \,+\, \frac{1}{\ve \tau_k} \, \Norm{v^{2\al_k}}{1, 1, Q_{k}, \th}
    \nonumber \\
    &\;\leq\;
    \frac{c \de \al_k 2^k}{\ve (\si -\si')}\,
    \Norm{1 \vee \frac{\mu^{\om}}{\th}}{p, B_k, \th}\,
    \Norm{v^{2\al_k}}{p_*, 1, Q_{k}, \th},
  \end{align}
  where we used Jensen's and H\"older's inequalities in the last step.  Combining \eqref{eq:norm_interpol} with \eqref{eq:aprio_term1} and \eqref{eq:aprio_term2} we get in the case $\tau_k n < 1$,
  \begin{align} \label{eq:Moser:iter:step3}
    \Norm{v}{2\al_{k+1}p_*, 2\al_{k+1}, Q_{k+1}, \th} 
    &\;=\;
    \Norm{v^{2\al_k}}{\al p_*, \al, Q_{k+1}, \th}^{1/(2\al_k)}
    \nonumber\\[.5ex]  
    &\;\leq\;
    \bigg(
      \frac{ c\, 2^{2k} \al_k}{\ve ( \si - \si' )^2}\, m^{\om}(n)
    \bigg)^{\!\!1/(2\al_k)}\,
    \Norm{v}{2\al_kp_*, 2\al_k, Q_k, \th}.
  \end{align}    
  By iterating \eqref{eq:Moser:iter:step2} and \eqref{eq:Moser:iter:step3}, respectively, and using the fact that $\sum_{k=0}^{\infty} k/\al_k < \infty$, there exists $c < \infty$ independent of $K$ such that
  \begin{align*}
    \Norm{v}{2\al_{K}p_*, 2\al_{K}, Q_{K}, \th}
    \;\leq\;
    c\,
    \prod_{k=0}^{K-1}
    \bigg(
      \big(1 + \de n^2 h_{\th}^{\om}(\phi)\big)\,
      \frac{m^{\om}(n)}{\ve \de (\si-\si')^2}
    \bigg)^{\!\!1/(2 \al_k)}\,
    \Norm{v}{2p_*, 2, Q_{\de, \si}(n), \th}.
  \end{align*}
  Setting $\ka / p_* \ldef \frac{1}{2} \sum_{k=0}^{\infty} 1/\al_k$ and using that $Q_K \downarrow Q_{\de, 1/2}(n)$ we get
  \begin{align*}
    \max_{(t,x) \in Q_{\de, 1/2}(n)} v(t,x)
    &\;=\;
    \lim_{K \to \infty} \Norm{v}{2\al_{K}p_*, 2\al_{K}, Q_{K}, \th}
    \\[.5ex]
    &\;\leq\;
    c\,
    \bigg(
      \big(1 + \de n^2 h_{\th}^{\om}(\phi)\big)\,
      \frac{m^{\om}(n)}{\ve \de (\si-\si')^2}
    \bigg)^{\!\!\ka/ p_*}\,
    \Norm{v}{2p_*, 2, Q_{\de, \si}(n), \th}.
  \end{align*}
  Finally, by using similar arguments as in \cite[Theorem~2.2.3]{S-C02} or \cite[Corollary~3.9]{ADS15}, there exists $c \equiv c(p, q, r, d') < \infty$ such that
  \begin{align*}
    \max_{(t,x) \in Q_{\de, 1/2}(n)} v(t,x)
    &\;\leq\;
    c\,
    \bigg(
      \Big(1 + \de n^2 h_{\th}^{\om}(\phi)\Big)\,
      \frac{m^{\om}(n)}{\varepsilon \de}
    \bigg)^{\!\!\ka}\,
    \Norm{v}{2, \infty, Q_{\de}(n), \th}
    \\[1ex]
    &\overset{\!\!\eqref{eq:hke:apriori}\!\!}{\;\leq\;}
    \frac{c\, C_{\mathrm{reg}}^{1/2}}{n^{d/2}}\, 
    \bigg(
      \Big(1 + \de n^2 h_{\th}^{\om}(\phi)\Big)\,
      \frac{m^{\om}(n)}{\varepsilon \de}
    \bigg)^{\!\!\ka}\,
    \me^{h_{\th}^{\om}(\phi) \de n^2}\, \norm{\phi f}{2}{V, \th}.
  \end{align*}
  Since for any $\ve \in (0, 1/2)$ there exists $c(\ve) < \infty$ such that for all $n \geq 1$ and $\de \in (0,1]$,
  \begin{align*}
    \Big(1 + \de n^2 h_{\th}^{\om}(\phi)\Big)^{\!\ka}\,
    \me^{-(1-2\ve) h_{\th}^{\om}(\phi) \de n^2}
    \;\leq\;
    c(\ve)
    \;<\; 
    \infty,
  \end{align*}
  the claim follows.
\end{proof}

\subsection{Heat kernel bounds}
\begin{prop}  \label{prop:est_cp}
  Suppose that Assumption~\ref{ass:lim_const} hold and let $x_0 \in V$ be fixed. Then,  for any given $x \in V$ and $t$ with $\sqrt{t} \geq  N_1(x_0) \vee N_2(x_0) \vee N_3(x_0, \om)$ the solution $u$ of the Cauchy problem in \eqref{eq:cauchy_prob} satisfies
  \begin{align*}
    |u(t,x)|
    \;\leq\;
    C_3\,t^{-d/2}\,
    \sum_{y\in V}\! \bigg(1 + \frac{d(x_0,x)}{\sqrt{t}}\bigg)^{\!\!\ga}
    \bigg( 1 + \frac{d(x_0,y)}{\sqrt{t}}\bigg)^{\!\!\ga}\;
    \frac{\phi(y)}{\phi(x)}\, \me^{2 h_{\th}^{\om}(\phi)t}\, f(y)
 \end{align*}
 with $\ga \ldef 2\ka - d/2$ and $C_3= C_3(d, p, q, C_{\mathrm{int}})$.
\end{prop}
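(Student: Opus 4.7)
The plan is to carry out a Davies-type perturbation argument that combines the $L^2 \to L^\infty$ smoothing estimate from the preceding maximal inequality with the semigroup property and the duality $(P^{\phi, \om}_s)^{\ast} = P^{\phi^{-1}, \om}_s$ on $\ell^2(V, \th)$. First I would set $v(s, z) \ldef \phi(z)\, u(s, z)$, which solves $\partial_s v - \cL^{\om}_{\th, \phi} v = 0$ with initial datum $\phi f$, and apply the maximal inequality at the point $(t/2, x)$ with the cylinder $Q_\delta(n)$ centered at $x_0$. The key choice is to take $n$ comparable to $\sqrt{t} + d(x_0, x)$ and $\delta = (t/2)/n^2 \in (0, 1]$; this ensures $x \in B(x_0, n/2)$, $\delta n^2 = t/2$, and (from the hypothesis on $\sqrt{t}$) $n \geq N_1(x_0) \vee N_2(x_0) \vee N_4(x_0, \om)$, so that $m^{\om}(n) \leq C_{\mathrm{int}}$. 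Collecting the prefactors $n^{-d/2}$, $(\varepsilon \delta)^{-\kappa}$ and $\me^{2(1-\varepsilon) h_\th^\om(\phi) \delta n^2}$ under this scaling yields
\begin{align*}
  v(t/2, x)
  \;\leq\;
  C\, t^{-d/4}\, \bigl(1 + d(x_0, x)/\sqrt{t}\bigr)^{\!\gamma}\, \me^{(1-\varepsilon) h_\th^\om(\phi)\, t}\, \norm{\phi f}{2}{V, \th},
\end{align*}
with $\gamma = 2\kappa - d/2$.

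Next I would upgrade this to a pointwise bound on the kernel $k(s, x, y) \ldef \phi(x)\phi(y)^{-1}\, p_\th^\om(s, x, y)$ of the perturbed semigroup $P^{\phi, \om}_s$ (viewed w.r.t.\ the reversible measure $\th$). The inequality above, valid for all $f \in \ell^2(V, \th)$, is equivalent to $\norm{k(t/2, x, \cdot)}{2}{V, \th} \leq M(t/2, x)$ with $M(s, z)$ the $z$-dependent prefactor on the right. Since $h_\th^\om(\phi) = h_\th^\om(\phi^{-1})$ (the definition being symmetric under $\phi \leftrightarrow \phi^{-1}$), the identical argument applied to the $\phi^{-1}$-perturbed problem produces $\norm{k(t/2, \cdot, y)}{2}{V, \th} \leq M(t/2, y)$. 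The semigroup identity $k(t, x, y) = \sum_z k(t/2, x, z)\, k(t/2, z, y)\, \th(z)$ together with Cauchy--Schwarz in $z$ then gives
\begin{align*}
  k(t, x, y)
  \;\leq\;
  M(t/2, x)\, M(t/2, y)
  \;\leq\;
  C\, t^{-d/2}\, \bigl(1 + d(x_0, x)/\sqrt{t}\bigr)^{\!\gamma}\, \bigl(1 + d(x_0, y)/\sqrt{t}\bigr)^{\!\gamma}\, \me^{2 h_\th^\om(\phi) t}.
\end{align*}
Unwinding $p_\th^\om(t, x, y) = (\phi(y)/\phi(x))\, k(t, x, y)$ inside $u(t, x) = \sum_y p_\th^\om(t, x, y)\, f(y)\, \th(y)$ then delivers the claim.

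The main technical obstacle will be the precise bookkeeping of the three $n$-dependent prefactors in the maximal inequality: the exponent $\gamma = 2\kappa - d/2$ arises only because the scaling $n \asymp \sqrt{t} + d(x_0, x)$ simultaneously forces $\delta \asymp (1 + d(x_0, x)/\sqrt{t})^{-2}$, after which the product $\delta n^2$ stays equal to $t/2$ irrespective of how far $x$ lies from $x_0$, so that the exponential factor $\me^{h \delta n^2}$ does not blow up with $d(x_0, x)$. A secondary subtlety, which corrects the oversight in \cite{ADS16a} flagged in Remark~\ref{rem:hke}(i), is that the forward application (yielding the $x$-factor) and the dual application (yielding the $y$-factor) must both be performed with the cylinder anchored at the single reference point $x_0$; without this common anchoring, the bound at the level of $u(t, x)$ would be missing one of the two $(1 + d(x_0, \cdot)/\sqrt{t})^\gamma$ factors that the sum over $y$ demands.
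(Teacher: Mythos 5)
Your proposal follows essentially the same route as the paper's (which defers to the Davies-type argument of \cite[Proposition~2.7]{ADS16a}): an $L^2\to L^\infty$ smoothing bound from the maximal inequality with $n\asymp\sqrt{t}+d(x_0,x)$ and $\delta n^2\asymp t$, followed by duality in $\phi\leftrightarrow\phi^{-1}$ and Chapman--Kolmogorov with Cauchy--Schwarz, and your bookkeeping of the prefactors $n^{-d/2}\delta^{-\kappa}$ correctly produces the exponent $\gamma=2\kappa-d/2$ anchored at the single point $x_0$. One small slip: with the paper's definition of $Q_{\delta,1/2}(n)$ the upper time endpoint is $(1-\varepsilon/2)\delta n^2$, so the choice $\delta n^2=t/2$ leaves $(t/2,x)$ just outside the cylinder; one must take $\delta n^2\geq t/(2-\varepsilon)$ (which still yields an exponential rate strictly below $h_\theta^\omega(\phi)t$ for $\varepsilon\in(0,1/4)$, so the argument is otherwise unaffected).
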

\begin{proof}
  Given \eqref{eq:max_ineqV} this follows as in the proof of \cite[Proposition~2.7]{ADS16a}. 
\end{proof}
\begin{proof}[Proof of Theorem~\ref{thm:hke}]
  First, notice that the heat kernel $(t, x) \mapsto p^{\om}_\theta(t, x, y)$ solves the Cauchy problem \eqref{eq:cauchy_prob} with $f = \indicator_{\{y\}}/\th(y)$.  Further, let $x_0 \in V$ be arbitrary but fixed and consider the function $\phi = \me^{\psi}$ with $\psi(z) \ldef - \la \min\big\{ d_{\th}^{\om}(x, z), d_{\th}^{\om}(x, y) \big\}$ for $\la > 0$.  Then, for sufficiently large $t$, an application of Proposition~\ref{prop:est_cp} yields
  \begin{align*}
    p^{\om}_\theta(t, x, y)
    \;\leq\;
    C_3\, t^{-d/2}\,
    \bigg( 1 + \frac{d(x_0, x)}{\sqrt t} \bigg)^{\!\!\ga}
    \bigg( 1 + \frac{d(x_0, y)}{\sqrt t} \bigg)^{\!\!\ga}\,
    \me^{\psi(y) - \psi(x) + 2 h_\th^\om(\phi) t}.
  \end{align*}
Next we optimise over $\la > 0$.  Since
  \begin{align*}
    \big|\nabla \psi(e)\big|
    \;\leq\;
    \la\, \big|d_{\th}^{\om}(x,e^+) - d_{\th}^{\om}(x,e^-)\big|
    \overset{\eqref{eq:def_chemdist}}{\;\leq\;} \lambda \,
    \bigg(
      1 \wedge \frac{\th(e^+) \wedge \th(e^-)}{\om(e)}
    \bigg)^{\!\!1/2}
  \end{align*}
  and $a \big(\cosh(x)-1\big)\leq \cosh(\sqrt{a} x)-1$ for all $x\in \bbR$  and any $a\geq 1$, we get
  \begin{align*}
    h_\th^\om(\phi)
    &\;= \;
    \max_{x\in V}  \sum_{y\sim x} \frac{ \om(x,y)}{\theta(x)} \Big(\cosh\big(\nabla \psi(\{x,y\})\big)-1 \Big)
    \;\leq\;
    C_{\mathrm{deg}} \big( \cosh(\la)-1 \big).
  \end{align*} 
  Hence,
  \begin{align*}
    \exp\!\big(\psi(y) - \psi(x) + 2h_{\th}^{\om}(\phi) t\big)
    &\;\leq \;
    \exp\!\bigg(
      d_{\th}^{\om}(x, y) 
      \Big( 
        \!-\!
        \la + \frac{2C_{\mathrm{deg}}t}{d_{\th}^{\om}(x, y)}\, 
        \big( \cosh(\la) - 1 \big)
      \Big)
    \bigg).
  \end{align*}
  By setting
  \begin{align*}
    F(s)
    \;=\;
    \inf_{\la >0} \Big( \!-\!\la + s^{-1} \big( \cosh(\la)-1\big) \Big),
  \end{align*}
  we finally get
  \begin{align} \label{eq:q_estF}
    p^{\om}_{\th}(t, x, y)
    \;\leq\;
    \frac{C_3}{t^{d/2}}\, \bigg( 1 + \frac{d(x_0,x)}{\sqrt t} \bigg)^{\!\!\ga}
    \bigg( 1 + \frac{d(x_0,y)}{\sqrt t} \bigg)^{\!\!\ga}\,
    \exp\!\bigg( 
      d_{\th}^{\om}(x, y)\, 
      F\bigg(\frac{d_{\th}^{\om}(x, y)}{2C_{\mathrm{deg}}t}\bigg) 
    \bigg).
  \end{align}
  Further, notice that
  \begin{align*}
    F(s)
    \;=\;
    s^{-1} \big( (1 + s^s)^{1/2} - 1 \big)
    \,-\, \log\big( s+(1+s^2)^{1/2}\big)
  \end{align*}
  and $F(s)\leq -s/2(1-s^2/10)$ for $s>0$ (see \cite{BD10} and \cite[page 70]{Da93}).  Hence, if $s\leq 3$, then $F(s)\leq -s/20$ whereas if $s \geq \me$, then
  \begin{align*}
    F(s)
    \;\leq\;
    1 - \log(2s)
    \;=\;
    -\log(2s/\me).
  \end{align*}
  Now, choose $x = x_0$.  In view of \eqref{eq:q_estF} we find suitable constants $c_1, \ldots, c_3$ such that if $d_{\th}^{\om}(x_0, y) \leq c_1 t$ then
  \begin{align*}
    p^{\om}_{\th}(t, x_0, y)
    \;\leq\;
    c_2\, t^{-d/2}\,
    \bigg( 1 + \frac{d(x_0,y)}{\sqrt t} \bigg)^{\!\!\ga}\,
    \exp\!\big( \!-\! 2c_3\, d_{\th}^{\om}(x_0, y)^2/t\,\big).
  \end{align*}
  This finishes the proof of (i).  In the case $d_{\th}^{\om}(x_0, y) \geq c_1 t$ statement (ii) can be obtained from \eqref{eq:q_estF} by similar arguments.
\end{proof}

\bibliographystyle{abbrv}
\bibliography{literature}

\end{document}